\newtheorem{thm}{Theorem}[section]
\newtheorem{prop}[thm]{Proposition}
\newtheorem{cor}[thm]{Corollary}
\newtheorem{lem}[thm]{Lemma}
\newtheorem*{thm*}{Theorem}
\theoremstyle{definition}
\newtheorem{defn}[thm]{Definition}
\theoremstyle{remark}
\newtheorem{exam}[thm]{Example}
\newcommand{\lip}{\mathrm{Lip}}
\newcommand{\Lpt}{L^\pt}
\newcommand{\loc}{\mathrm{loc}}
\newcommand{\pt}{\mathrm{pt}}
\newcommand{\Lloc}{L^\loc}
\numberwithin{equation}{section}
\begin{document}

\title{Weighted composition operators preserving various Lipschitz constants}
\author{Ching-Jou Liao, Chih-Neng Liu, Jung-Hui Liu \and Ngai-Ching Wong}

\address[C.-J. Liao]{Department of Mathematics, Hong Kong Baptist University, Hong Kong.}
\email[C.-J. Liao]{cjliao@hkbu.edu.hk}

\address[J.-H. Liu]{School of Information Engineering, Sanming University,
No.~25, Jing Dong Road San Yuan District, Sanming City, Fujian 365004, China.}
\email[J.-H. Liu]{18760259501@163.com}

\address[C.-N. Liu and N.-C. Wong]{Department of Applied Mathematics,
         National Sun Yat-sen University,
         Kaohsiung, 80424, Taiwan}
\email[C.-N. Liu]{bardiel0213@gmail.com}
\email[N.-C. Wong]{wong@math.nsysu.edu.tw}
\date{June 8, 2023}

\dedicatory{Dedicated to Professor Anthony To-Ming Lau on the occasion of his 80th birthday}


\begin{abstract}
Let $\lip(X)$, $\lip^b(X)$,
 $\lip^{\loc}(X)$  and $\lip^\pt(X)$ be the   vector spaces of  Lipschitz,  bounded Lipschitz,
 locally Lipschitz  and pointwise Lipschitz
 (real-valued) functions defined on a metric space $(X, d_X)$, respectively.
We show that if a weighted composition operator $Tf=h\cdot f\circ \varphi$ defines a bijection between such vector spaces
preserving Lipschitz constants, local Lipschitz constants or pointwise Lipschitz constants,
then $h= \pm1/\alpha$ is a constant function for some scalar $\alpha>0$ and $\varphi$ is an $\alpha$-dilation.

Let $U$ be  open connected and $V$ be open, or both $U,V$ are convex bodies, in normed linear spaces $E, F$, respectively.
Let $Tf=h\cdot f\circ\varphi$
be a bijective weighed composition operator between the vector spaces $\lip(U)$ and $\lip(V)$,
$\lip^b(U)$ and $\lip^b(V)$, $\lip^\loc(U)$ and $\lip^\loc(V)$,
or $\lip^\pt(U)$ and $\lip^\pt(V)$,
preserving the Lipschitz, locally Lipschitz, or pointwise Lipschitz constants,
respectively.  We show that there is a linear isometry $A: F\to E$, an $\alpha>0$ and a vector $b\in E$ such that
$\varphi(x)=\alpha Ax + b$, and
$h$ is a constant function assuming value $\pm 1/\alpha$.
More concrete results are obtained for the
special cases when $E=F=\mathbb{R}^n$, or when $U,V$ are $n$-dimensional flat manifolds.
\end{abstract}

\keywords{(Local/pointwise) Lipschitz functions, (Local/pointwise) Lipschitz constants, weighted composition operators, flat manifolds}

\subjclass[2000]{46B04, 51F30, 26A16}

\maketitle

\section{Introduction}

Let $f: (X,d_X)\to (Y,d_Y)$ be a map between metric spaces.  Let $B(p,\epsilon)$ be the
open ball in the metric space $X$ centered at $p$ of radius $\epsilon>0$.
We say that $f$ is  \emph{Lipschitz}
 if
the \emph{Lipschitz constant}
$$
L(f)=\sup_{x\neq y \in X}\frac{d_Y(f(x),f(y))}{d_X(x,y)}< +\infty;
$$
we say that $f$ is \emph{locally Lipschtiz} if the \emph{local Lipschtiz constants}
$$
\Lloc_{p}(f)=\lim_{\epsilon\to 0^+}\sup_{x\neq y \in B(p,\epsilon)}\frac{d_Y(f(x),f(y))}{d_X(x,y)}<+\infty, \quad\forall p\in X;
$$
and we say that  $f$ is \emph{pointwise Lipschitz} if  the
\emph{pointwise Lipschitz constants}
$$
\Lpt_p(f)=\limsup_{x \to p}\frac{d_Y(f(x),f(p))}{d_X(x,p)}<+\infty, \quad\forall p\in X.
$$
Let $\lip(X)$, $\lip^b(X)$, $\lip^{\loc}(X)$ and  $\lip^{\pt}(X)$ denote the (real) vector spaces of Lipschitz,
 bounded Lipschitz,
 locally Lipschtiz,
and  pointwise Lipschitz \emph{{}} (real-valued) functions on $X$ into $Y=\mathbb{R}$, respectively.
Clearly, $\lip^b(X)\subseteq \lip(X)\subseteq \lip^{\loc}(X) \subseteq \lip^{\pt}(X)$ and $0\leq \Lpt(f) \leq \Lloc\leq L(f)$,
in general, but all the inclusions and inequalities can be strict.
However, for a bounded metric space $X$ we have $\lip(X)=\lip^b(X)$.
On the other hand, $\lip(X)=\lip(\overline{X})$ where $\overline{X}$ is the metric completion of $X$.
See, e.g.,  \cite[Examples 2.6 and 2.7]{DC-J2010}, for more details.

 We are interested in the question how the (resp.\ local, pointwise) Lipschitz constants determine the (resp.\ local, pointwise)
Lipschitz function spaces.
For example, if $T:\lip(X)\to \lip(Y)$ is a bijective linear map preserving Lipschitz constants, namely,
$$
L(Tf)=L(f), \quad\text{for every $f\in \lip(X)$,}
$$
we ask if the underlying metric spaces $X,Y$ are equivalent, and if $T$ carries some good structure.
The following results give us motivations.

\begin{prop}\label{thm:T=is-wco}
\begin{enumerate}[(a)]
\item
\textup{(Weaver \cite[Theorem D]{Wea1995}; see also \cite[Sections 2.6 and 2.7]{Wea1999})}
Let $X,Y$ be complete metric spaces of diameter at most $2$, which   cannot be written as
a disjoint union of two nonempty subsets
with distance  $\geq 1$.
If $T:\lip(X)\rightarrow\lip(Y)$ is a surjective
linear isometry with respect to the norm $\|f\|=\max \{\|f\|_{\infty},L(f)\}$, then
  $$
  Tf=\alpha\cdot f\circ \varphi, \quad\forall f\in \lip(X),
  $$
  where $\varphi:Y\rightarrow X$ is a surjective isometry and
  $\alpha$ is a unimodular constant.

\item
\textup{(Wu  \cite{Wu2006}, Araujo and Dubarbie \cite[Corollary 6.1]{AraDub09})}
Let $X$ be a bounded complete metric space and $Y$ be a  compact metric space.
If $T:\lip(X)\rightarrow \lip(Y)$ is a  linear bijection preserving zero products, that is,
$$
fg=0 \quad\implies\quad TfTg =0,
$$
 then
$$
Tf=h\cdot f\circ \varphi, \quad\forall f\in \lip(X),
$$
where $\varphi$ is a bijective Lipschitz map from $Y$ onto $X$ with Lipschitz inverse $\varphi^{-1}$, and $h \in \lip(Y)$
is nonvanishing.
\end{enumerate}
\end{prop}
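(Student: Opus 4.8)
The two parts rest on genuinely different machinery, so the plan is to prove them separately; what unifies them is that each concludes with a \emph{weighted composition} representation $Tf=h\cdot f\circ\varphi$, and it is precisely this common conclusion that motivates the rigidity theorems to follow. For (a) the natural tool is the predual and the extreme-point geometry of the dual unit ball; for (b) it is the theory of separating (disjointness-preserving) maps carried out through a support analysis.

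For part (a), I would use that $(\lip(X),\|\cdot\|)$ with $\|f\|=\max\{\|f\|_\infty,L(f)\}$ is a dual Banach space and analyse the weak$^\ast$ geometry of its dual unit ball. The adjoint $T^\ast$ of a surjective linear isometry is then a weak$^\ast$-homeomorphism carrying extreme points of the dual ball to extreme points. The crux is to classify these extreme points: writing the dual norm as the $\ell^1$-type combination dual to the $\max$ of the seminorms $\|\cdot\|_\infty$ and $L(\cdot)$, every extreme point is, up to a unimodular scalar, either an evaluation $\delta_x$ ($x\in X$) or a normalized molecule $m_{x,y}=(\delta_x-\delta_y)/d_X(x,y)$ ($x\neq y$), and the hypothesis $\diam X\le 2$ is exactly what makes this dichotomy clean (it pins the evaluations down as genuine extreme points at actual points of the complete space $X$). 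One then argues that $T^\ast$ cannot interchange the two families, so that $T^\ast\delta_y=\alpha(y)\,\delta_{\varphi(y)}$ with $|\alpha(y)|=1$; hence $Tf(y)=(T^\ast\delta_y)(f)=\alpha(y)\,f(\varphi(y))$, and inspecting the induced action on molecules shows that $\varphi:Y\to X$ is a surjective isometry.

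For part (b), I would run the standard support argument for separating maps. Fixing $y\in Y$, I would attach to the functional $f\mapsto Tf(y)$ a support set $S(y)\subseteq X$ and show, using the hypothesis $fg=0\Rightarrow Tf\,Tg=0$ together with the Urysohn-type Lipschitz bump functions that are available on a bounded complete space (built from distance functions $d_X(\cdot,A)$), that $S(y)$ is a single point $\varphi(y)$. Linearity then yields $Tf(y)=h(y)\,f(\varphi(y))$ for a scalar $h(y)$, and running the same argument for $T^{-1}$ (also separating, by bijectivity) forces $h$ to be nonvanishing and $\varphi:Y\to X$ to be a bijection. The regularity claims---that $h\in\lip(Y)$ and that $\varphi,\varphi^{-1}$ are Lipschitz---then follow by testing $Tf$ and $T^{-1}g$ against explicit Lipschitz functions and invoking compactness of $Y$ and boundedness of $X$ to bound the resulting constants.

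The main obstacle in (a) is the extreme-point analysis itself: establishing the clean evaluation-versus-molecule dichotomy (where $\diam X\le 2$ is essential) and then showing that $T^\ast$ cannot mix the two families. The indecomposability hypothesis---that $X$ and $Y$ admit no splitting into two nonempty pieces at distance $\ge 1$---then rules out the piecewise constructions that would otherwise let $\varphi$ or the phase behave independently on separated parts, forcing a single global isometry $\varphi$ and a single constant $\alpha$ (the preservation of $L(\cdot)$ already prevents a genuinely non-constant unimodular weight, since $\alpha=T\mathbf{1}$ would then add spurious Lipschitz variation). In (b) the delicate point is likewise local-to-global: producing a genuine \emph{singleton} support rather than a larger set, and then upgrading the bare weighted-composition form to Lipschitz regularity of $h$, $\varphi$, and $\varphi^{-1}$, for which the boundedness and compactness hypotheses are indispensable.
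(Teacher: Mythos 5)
This proposition is quoted background: the paper itself gives \emph{no} proof of it, citing Weaver and Wu/Araujo--Dubarbie instead, so your attempt can only be measured against those sources. Your two sketches do follow the same general routes as the cited proofs --- an extreme-point/predual analysis for (a), a support-point (separating-map) analysis for (b) --- but each sketch has a concrete gap at exactly the point where the real work lies.

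In (a), you classify the extreme points of the \emph{dual} ball of $\bigl(\lip(X),\max\{\|\cdot\|_\infty,L(\cdot)\}\bigr)$ as evaluations and molecules. That dichotomy is not available for the dual ball. Under the hypothesis $\diam X\le 2$ one has an isometric identification of $\lip(X)$ with $\lip_0(X^\dagger)$, where $X^\dagger$ adjoins to $X$ a base point at distance $1$ from every point (this is precisely where $\diam X\le 2$ enters: it makes $X^\dagger$ a metric space), and the evaluation-versus-molecule picture then describes, via Milman's theorem, the extreme points of the ball of the \emph{predual} (the Lipschitz-free space over $X^\dagger$, whose ball is the closed convex hull of molecules; note $m_{x,e}=\delta_x$), not of the far larger dual ball, whose extreme points include functionals ``supported at infinity'' that are neither evaluations nor molecules. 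To run your argument you must therefore work with the pre-adjoint, which requires first proving that a surjective isometry $T$ is automatically weak$^\ast$-to-weak$^\ast$ continuous; this step, together with the proof that the induced map on extreme points cannot send the evaluation family into the molecule family, is the actual substance of Weaver's proof and is what your sketch omits. In (b), the step ``$T^{-1}$ is also separating, by bijectivity'' is not automatic: a linear bijection preserving zero products need not have a zero-product-preserving inverse, which is exactly why Araujo--Dubarbie formulate their results for \emph{biseparating} maps. Deducing the separating property of $T^{-1}$ (or otherwise obtaining singleton supports for it and surjectivity of $\varphi$) is where the compactness of $Y$ and the boundedness and completeness of $X$ must genuinely be used, and that argument is missing from your outline rather than merely deferred.
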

\noindent
See also, e.g., \cite{DC-J2010,Li2014, Wolf81}.
However, if  the map $T$ preserves only the Lipschitz constants without other assumption, $T$
 needs not carry a weighted composition operator form.
For example, let $\Psi$ be an arbitrary linear functional of the vector space $\lip(X)$.  Then the assignment
$Tf(x) = f(x) + \Psi(f)$ defines a linear bijective map from $\lip(X)$ onto $\lip(X)$
 preserving Lipschitz constants.  But $T$ does not assume a weighted composition form.

In this paper, based on \cite{Liu-thesis}, we study the structure of a weighted composition operator  $Tf=h\cdot f\circ \varphi$
 between
Lipschitz function spaces defined on metric spaces $X$ and $Y$, which preserves  Lipschitz constants.
We will see that $\varphi$ is an \emph{$\alpha$-dilation}
for a positive constant $\alpha$, i.e., $d_X(\varphi(u), \varphi(v)) = \alpha d_Y(u,v)$, $\forall u,v\in Y$,
 and $h$ is  a constant function assuming the value  $\pm 1/\alpha$.
 In particular, if   $U$ is open connected and $V$ is open, or both $U,V$ are
convex bodies,
in normed linear spaces
$E,F$, respectively, then
 $\varphi(x) = \alpha Ax + b$ for a linear isometry $A: F\to E$ and a vector $b\in E$.  In other words,
 $$
 Tf(x)=\pm\alpha^{-1}f(Ax+b).
 $$
When $E=F=\mathbb{R}^n$, we obtain similar results for those bijective weighted composition operators
preserving  local or pointwise Lipschitz constants.
We also extend these results to the case when $U, V$ are flat manifolds.

\section{Weighted composition operators  preserving Lipschitz constants}

All vector spaces discussed in this paper are over the real number field $\mathbb{R}$.

In this section, we study Lipschitz constant preserving weighted composition operators between Lipschtiz function spaces.
We begin with some well known results.
Here, by an \emph{$\alpha$-dilation} (resp.\ \emph{isometry}) between metric spaces $X, Y$ for $\alpha>0$,
we mean a map $\varphi: X\to Y$ such that
$d_Y(\varphi(x), \varphi(y))= \alpha d_X(x, y)$ (resp.\ when $\alpha=1$) for all $x,y\in X$.  Moreover, a
\emph{convex body} in a normed linear space is a (not necessarily closed or bounded) convex set with nonempty interior.

\begin{prop}\label{Man1972}
\begin{enumerate}[(a)]
  \item \textup{({Mazur-Ulam theorem; see, e.g., \cite{Fle2003}})}
  Every surjective isometry $T: M\to N$ between normed linear spaces is affine; namely,
  $$
  T(\lambda x + (1-\lambda) y)= \lambda Tx + (1-\lambda)Ty, \quad\text{for all $x,y\in M$ and  $0\leq \lambda \leq 1$.}
  $$
  \item \textup{(Mankiewicz \cite{Man1972})}
Every isometry $\varphi: U\to V$  from an open connected
set (resp.\   convex body) $U$ in a normed linear space $M$
onto an open set (resp.\ convex body) $V$ in a normed linear space $N$
has a unique isometric extension  from $M$ onto $N$.
\end{enumerate}
\end{prop}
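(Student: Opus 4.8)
The plan is to prove (a) first and then bootstrap its mechanism into the local setting of (b). For (a), I would reduce ``affine'' to ``midpoint preserving'': if $T(0)=0$ and $T\bigl(\tfrac12(x+y)\bigr)=\tfrac12(Tx+Ty)$ for all $x,y$, then putting $y=0$ gives $T(x/2)=\tfrac12 Tx$, whence $T(x+y)=Tx+Ty$; additivity together with the continuity of the isometry $T$ forces $\mathbb{R}$-linearity, and the general case follows after subtracting the constant $T(0)$. To prove that a surjective isometry $T\colon M\to N$ preserves midpoints I would use point reflections. Fix $a,b\in M$, set $z=\tfrac12(a+b)$ and $z'=\tfrac12(Ta+Tb)$, and let $\sigma_z(u)=2z-u$ and $\sigma_{z'}(w)=2z'-w$ be the corresponding reflections, which are surjective isometries fixing $z$ and $z'$. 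Let $W$ be the set of surjective isometries of $M$ fixing both $a$ and $b$. A direct computation shows that if $h\in W$ then $g:=\sigma_z\circ h^{-1}\circ\sigma_z\circ h$ again lies in $W$ and satisfies $\|g(z)-z\|=2\,\|h(z)-z\|$; since every $h\in W$ has displacement $\|h(z)-z\|\le\|a-b\|$, the supremum $\mu$ of these displacements obeys $2\mu\le\mu$, hence $\mu=0$ and every element of $W$ fixes $z$. Finally $g_0:=T^{-1}\circ\sigma_{z'}\circ T\circ\sigma_z$ belongs to $W$ (each reflection interchanges $a$ and $b$, while $T,T^{-1}$ undo one another), so $g_0(z)=z$, which unwinds to $\sigma_{z'}(Tz)=Tz$ and therefore $Tz=z'$. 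This is midpoint preservation, and (a) follows.

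For (b) I would first treat the convex-body case and aim to show that $\varphi$ is the restriction of an affine isometry. Since $\varphi$ and $\varphi^{-1}$ are isometries, they are homeomorphisms carrying interiors to interiors, and the reflection argument of (a) can in principle be localized: for $x,y$ in a small ball well inside $U$, their algebraic midpoint $m$ is a metric midpoint, and one wants to pin $\varphi(m)$ to the algebraic midpoint $\tfrac12(\varphi(x)+\varphi(y))$ exactly as above. Once $\varphi$ is shown to preserve algebraic midpoints on the interior of $U$, it agrees there with a map $x\mapsto Ax+b$ in which $A$ is a linear isometry; as $V=\varphi(U)=A(U)+b$ has nonempty interior, the subspace $A(M)$ has nonempty interior and is therefore all of $N$, so $\Phi(x)=Ax+b$ is a surjective affine isometry extending $\varphi$. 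For the open connected case I would run the same local analysis on each ball $B(p,r)\subseteq U$ to obtain an assignment $p\mapsto(A_p,b_p)$ of affine isometries agreeing with $\varphi$ near $p$; two such that agree on an overlap agree identically, so connectedness of $U$ forces a single pair $(A,b)$ and the global extension $\Phi$. Uniqueness is immediate, since two affine maps coinciding on a nonempty open set coincide everywhere.

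The hard part will be exactly the localization in (b): the point reflections $\sigma_z$ used so decisively in (a) need not map $U$ into itself, so the clean group-theoretic displacement argument is not directly available near the boundary. Overcoming this---showing that an isometry between convex bodies nonetheless respects the \emph{algebraic} (not merely the \emph{metric}) midpoint, equivalently that it carries segments to segments---is the technical heart of Mankiewicz's theorem; by contrast, the patching and extension steps above are comparatively routine once local affineness is in hand.
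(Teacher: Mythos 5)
First, a point of comparison: the paper gives no proof of this proposition at all. It is stated as background, with (a) attributed to Mazur--Ulam (via \cite{Fle2003}) and (b) to Mankiewicz \cite{Man1972}, and both are used later as black boxes. So your attempt has to be judged against the classical proofs in those references.

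Your part (a) is correct, and it is exactly the standard Mazur--Ulam argument: reduce affineness to midpoint preservation, bound the displacement $\|h(z)-z\|\le\|a-b\|$ over the group $W$ of surjective self-isometries of $M$ fixing $a$ and $b$, derive the doubling identity $\|\sigma_z h^{-1}\sigma_z h(z)-z\|=2\|h(z)-z\|$ (using that $\sigma_z$ and $h^{-1}$ are isometries fixing $z$ and $h(z)$ respectively), conclude $2\mu\le\mu$ so $\mu=0$, and apply this to $g_0=T^{-1}\circ\sigma_{z'}\circ T\circ\sigma_z$. All these computations go through as you indicate.

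Part (b), however, is not a proof. What you actually carry out are the routine reductions: patching local affine representations over a connected set, extending a densely defined linear isometry using the fact that a linear subspace with nonempty interior is the whole space, and uniqueness from coincidence on an open set. The core claim --- that an isometry between open sets or convex bodies preserves \emph{algebraic} midpoints, i.e.\ is locally affine --- is left as ``the hard part,'' and you say so yourself. This is not a gap that more care in ``localizing'' (a) will close, for two structural reasons. First, metric midpoints in a normed space are badly non-unique: in $(\mathbb{R}^2,\|\cdot\|_\infty)$ every point $(1,t)$ with $|t|\le 1$ is a metric midpoint of $(0,0)$ and $(2,0)$; so knowing that the algebraic midpoint $m$ of $x,y$ is \emph{a} metric midpoint does nothing to pin down $\varphi(m)$. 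Second, the displacement-doubling trick needs to conjugate by reflections on \emph{both} sides, i.e.\ it needs $\sigma_{z'}$ to map $\varphi(B)$ into itself for a ball $B\subseteq U$; but a priori $\varphi(B)$ is only the intrinsic ball $B(\varphi(z),s)\cap V$, not known to be a ball of $N$ or symmetric about $z'$, so the map $\varphi^{-1}\circ\sigma_{z'}\circ\varphi\circ\sigma_z$ need not even be defined. Overcoming precisely this obstruction is the content of Mankiewicz's paper, and his argument is genuinely different, not a localized Mazur--Ulam. In summary: (a) is proved; (b) is a correct reduction of the extension statement to the theorem being quoted, but the theorem itself remains unproved.
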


Recall that a metric space $(X, d_X)$ is \emph{quasi-convex} if  there is
a constant $C >0$ such that for any $x, y\in X$ there is a continuous
path in $X$ joining $x$ to $y$ with
length at most $Cd_X(x,y)$.

\begin{prop}[{\cite[Theorems 3.9 and 3.12]{Gar2004}}]\label{prop:Gar2004}
Let $\varphi:Y\to X$ be a map between   metric spaces.  Then $\varphi$ is Lipschitz if and only if
\begin{enumerate}[(a)]
\item  $f\circ \varphi\in \lip(Y)$ for each $f\in \lip(X)$; or
\item   $f\circ \varphi\in \lip^b(Y)$ for each $f\in \lip^b(X)$, provided that  both $X, Y$ are quasi-convex.
\end{enumerate}
\end{prop}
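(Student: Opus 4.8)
The forward implications are immediate: if $\varphi$ is Lipschitz with constant $L(\varphi)$, then $L(f\circ\varphi)\le L(f)\,L(\varphi)$ for every $f$, so $f\circ\varphi\in\lip(Y)$ whenever $f\in\lip(X)$, and in case (b) the composition is in addition bounded by $\|f\|_\infty$, hence lies in $\lip^b(Y)$; neither of these needs quasi-convexity. The entire content is the converse, and the plan is to extract it from the closed graph theorem rather than from an explicit test-function construction.

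For (a): the hypothesis says precisely that the linear map $C_\varphi\colon f\mapsto f\circ\varphi$ carries $\lip(X)$ into $\lip(Y)$. I equip each space with the Banach-space norm $\|f\|=|f(x_0)|+L(f)$ relative to fixed base points $x_0\in X$, $y_0\in Y$. I would first check that $C_\varphi$ is closed: norm convergence in either space forces pointwise convergence, and $f_n\to f$ pointwise gives $f_n\circ\varphi\to f\circ\varphi$ pointwise, so any limit of $C_\varphi f_n$ must equal $C_\varphi f$. The closed graph theorem then yields a constant $K$ with $L(f\circ\varphi)\le\|C_\varphi f\|\le K\|f\|$. Now fix $u\ne v$ in $Y$ and test against $f=d_X(\cdot,\varphi(v))$, which is $1$-Lipschitz; the constant-shift $f\mapsto f-f(x_0)$ changes neither $L(f)$ nor $L(f\circ\varphi)$, so I may assume $f(x_0)=0$ and obtain $L(f\circ\varphi)\le K$. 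Since $f(\varphi(v))=0$ and $f(\varphi(u))=d_X(\varphi(u),\varphi(v))$, the difference quotient of $f\circ\varphi$ at $(u,v)$ is exactly $d_X(\varphi(u),\varphi(v))/d_Y(u,v)$, whence $d_X(\varphi(u),\varphi(v))\le K\,d_Y(u,v)$; as $u,v$ were arbitrary, $\varphi$ is $K$-Lipschitz.

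For (b) the same scheme applies to $C_\varphi\colon\lip^b(X)\to\lip^b(Y)$ with the norm $\max\{\|f\|_\infty,L(f)\}$, giving $L(f\circ\varphi)\le K\max\{\|f\|_\infty,L(f)\}$. The obstacle is that the unbounded distance function is no longer admissible, so I would instead test, for $u\ne v$ with $s:=d_X(\varphi(u),\varphi(v))$, against the tent $f(x)=\max\{0,\,s-d_X(x,\varphi(v))\}$, which is $1$-Lipschitz with $\|f\|_\infty=s$ and satisfies $f(\varphi(v))=s$, $f(\varphi(u))=0$. This forces $s/d_Y(u,v)\le K(s+1)$, i.e.\ $s\le 2K\,d_Y(u,v)$ as soon as $d_Y(u,v)<1/(2K)$. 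Thus $\varphi$ obeys a Lipschitz bound only at \emph{small scale}, and this is exactly where quasi-convexity enters: joining $u$ to $v$ by a path in $Y$ of length $\le C\,d_Y(u,v)$ and subdividing it into steps shorter than $1/(2K)$, I can chain the small-scale estimates to conclude $d_X(\varphi(u),\varphi(v))\le 2KC\,d_Y(u,v)$ for all $u,v$, so $\varphi$ is globally Lipschitz.

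The crux is therefore twofold. In (a) it is the reduction, via closed graph plus the constant-shift, to a bound on the Lipschitz seminorm alone, which is what couples directly to the Lipschitz behaviour of $\varphi$. In (b) the main obstacle is that boundedness of the test functions yields only a \emph{local} comparison of $d_X(\varphi(\cdot),\varphi(\cdot))$ with $d_Y$, and upgrading this to a global estimate is precisely what quasi-convexity is for. I would also record an alternative, purely constructive route to the converse: build a single Lipschitz $f$ as a sum of tents $\max\{0,\rho_n-d_X(\cdot,b_n)\}$ placed near points $b_n=\varphi(y_n)$ witnessing $d_X(\varphi(x_n),\varphi(y_n))/d_Y(x_n,y_n)\to\infty$. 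Here the delicate point is to keep the supports separated, after passing to a subsequence, so that the sum stays Lipschitz while $f\circ\varphi$ acquires unbounded difference quotients; this separation is awkward exactly when the $b_n$ cluster, which is why I prefer the closed graph approach.
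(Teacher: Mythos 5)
The paper offers no proof of this proposition at all --- it is imported verbatim from \cite[Theorems 3.9 and 3.12]{Gar2004} --- so the only comparison available is with that cited source. Judged on its own terms, your argument is correct and complete, and it takes a genuinely different route: Garrido and Jaramillo (like the arguments actually written out in this paper, e.g.\ in Theorem~\ref{thm:wecomop} and Proposition~\ref{prop:lip-loc-pt-comp}) work constructively with explicit distance-type test functions, whereas you obtain the quantitative bound $L(f\circ\varphi)\le K\|f\|$ abstractly from the closed graph theorem and only afterwards specialize to one test function per pair of points. What your scheme buys is brevity and a clean separation of roles: in (a) the constant-shift reduction plus the function $d_X(\cdot,\varphi(v))$ finishes immediately; in (b) the tent function yields the small-scale estimate $d_X(\varphi(u),\varphi(v))\le 2K\,d_Y(u,v)$ whenever $d_Y(u,v)<1/(2K)$, and quasi-convexity enters exactly once, to chain this estimate along a short path --- in particular your proof uses quasi-convexity of $Y$ only, so the hypothesis on $X$ is never needed, for either implication. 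The price is the reliance on completeness of the function spaces: for the closed graph theorem you must know that $\lip(X)$ under $|f(x_0)|+L(f)$ and $\lip^b(X)$ under $\max\{\|f\|_\infty,L(f)\}$ are Banach spaces even when the underlying metric spaces $X,Y$ are incomplete (they are, and your remark that norm convergence forces pointwise convergence is the key both to this completeness and to the closedness of the graph, but it deserves a written line of proof rather than an aside). Your concluding sketch of a constructive alternative is essentially the strategy of the cited source; the support-separation difficulty you flag there is real and is normally handled by passing to a subsequence along which the distortion ratios grow fast enough, which is precisely the delicacy your functional-analytic route avoids.
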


\begin{thm}\label{thm:wecomop}
Let $(X,d_X)$ and  $(Y,d_Y)$ be (resp.\ quasi-convex) metric spaces.
Let $T:\lip(X)\to \lip(Y)$ (resp.\ $T: \lip^b(X)\to\lip^b(Y)$) be a bijective weighted composition operator
$Tf=h\cdot f\circ \varphi$
preserving Lipschitz constants.
Then $\varphi$ is an $\alpha$-dilation from $Y$ onto a dense subset $X_0=\varphi(Y)$ of $X$ for  $\alpha>0$,
and $h$ is a constant function assuming either $1/\alpha$ or $-1/\alpha$.
If $Y$ is complete, then $\varphi(Y)=X$.
\end{thm}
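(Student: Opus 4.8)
The plan is to read off $h$ first and then recover the metric structure of $\varphi$ by feeding $T$ a small, well-chosen family of test functions.

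I would begin by applying $T$ to the constant function $1\in\lip(X)$. Since $T1=h\cdot(1\circ\varphi)=h$ and $L(1)=0$, the constant-preserving hypothesis gives $L(h)=L(T1)=L(1)=0$, so $h$ is a constant, say $h\equiv c$; and $c\neq 0$, for otherwise $T\equiv 0$ contradicts injectivity. Next I would show $\varphi$ is Lipschitz: for every $f\in\lip(X)$ we have $f\circ\varphi=c^{-1}Tf\in\lip(Y)$, so Proposition~\ref{prop:Gar2004}(a) applies (and its part~(b) in the quasi-convex $\lip^b$ case). Writing $L(Tf)=L(f)$ as $|c|\,L(f\circ\varphi)=L(f)$ and putting $\alpha=1/|c|$ sets up the remaining work.

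To upgrade this into a pointwise metric identity I would test against distance functions. Fix $v\in Y$ and apply the displayed identity to $f=d_X(\cdot,\varphi(v))$, which has $L(f)=1$; comparing $w$ with $w'=v$ inside the supremum defining $L(f\circ\varphi)$ yields $d_X(\varphi(w),\varphi(v))\le\alpha\,d_Y(w,v)$. For the reverse inequality I would exploit surjectivity: choose $f\in\lip(X)$ with $Tf=d_Y(\cdot,v_0)$, so that $L(f)=L(Tf)=1$, and apply the resulting $1$-Lipschitz estimate for $f$ to the pair $\varphi(u),\varphi(v_0)$; since $f(\varphi(u))=c^{-1}d_Y(u,v_0)$ and $f(\varphi(v_0))=0$, this gives $d_X(\varphi(u),\varphi(v_0))\ge\alpha\,d_Y(u,v_0)$. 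Together the two bounds show $\varphi$ is an $\alpha$-dilation and $h\equiv c=\pm1/\alpha$.

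It remains to locate the image. If $X_0=\varphi(Y)$ were not dense, I would pick a ball $B(x^*,\epsilon)$ disjoint from $X_0$ and a nonconstant Lipschitz bump $f$ (e.g.\ $f(x)=\max\{0,\epsilon/2-d_X(x,x^*)\}$) supported in it; then $Tf=c\,f\circ\varphi\equiv0$, forcing $0=L(Tf)=L(f)>0$, a contradiction. Finally, when $Y$ is complete, $\varphi^{-1}\colon X_0\to Y$ is a $(1/\alpha)$-dilation, hence uniformly continuous, so $X_0$ inherits completeness and is therefore closed; being both closed and dense, $X_0=X$.

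The main obstacle is that preserving Lipschitz constants records only suprema, not pointwise information, so the metric behaviour of $\varphi$ has to be squeezed out through test functions whose Lipschitz constant is attained in a controllable, localizable way---distance functions for the upper bound and, crucially, preimages of distance functions (available only via surjectivity) for the lower bound. In the bounded, quasi-convex setting these unbounded test functions must be replaced by their truncations $\min\{d_X(\cdot,p),R\}\in\lip^b(X)$ and one lets $R\to\infty$; verifying that such truncations still have Lipschitz constant exactly $1$ and that the relevant pairs lie in the unclamped region is the delicate but routine point. (One-point spaces are degenerate and handled trivially.)
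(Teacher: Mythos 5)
Your proof is correct and covers both cases of the theorem, but its core is noticeably more direct than the paper's, so a comparison is worthwhile. The shared skeleton: $h=T1$ is constant and nonzero because $L(T1)=L(1)=0$, distance functions serve as test functions, and surjectivity supplies the reverse estimate. The paper, however, routes everything through \emph{global} Lipschitz constants: it uses Proposition \ref{prop:Gar2004} to see that $\varphi$ is Lipschitz, gets $L(\varphi)\ge\alpha$ from submultiplicativity, rules out $L(\varphi)>\alpha$ by contradiction with the test function $\tilde f=\min\{d_X(\cdot,\varphi(p)),d_X(\varphi(p),\varphi(q))\}$, then constructs $\psi=\varphi^{-1}\colon X_0\to Y$, extends $\varphi$ and $\psi$ to the metric completions to check they are mutually inverse, proves $L(\psi)=\alpha^{-1}$ the same way, and finally chains $\alpha=L(\varphi)\ge d_X(\varphi(u),\varphi(v))/d_Y(u,v)\ge 1/L(\psi)=\alpha$. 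You obtain the same conclusion pointwise in two steps: the upper bound by feeding in $f=d_X(\cdot,\varphi(v))$, and the lower bound by pulling $d_Y(\cdot,v_0)$ back through $T^{-1}$ --- which compresses the paper's entire treatment of $\psi$, completions included, into a single use of surjectivity. You also actually prove density of $\varphi(Y)$ with a bump function, which the paper merely asserts from bijectivity. Two minor notes: (i) your flagged worry about truncations in the $\lip^b$ case is genuine but resolvable, either via quasi-convexity (path-connectedness plus the intermediate value theorem applied to $t\mapsto d_X(\gamma(t),p)$ yields points at all small distances from $p$, so $L(\min\{d_X(\cdot,p),R\})=1$), or more cleanly via the paper's trick of truncating at the level of the pair being tested, $\min\{d_X(\cdot,\varphi(v)),d_X(\varphi(w),\varphi(v))\}$, whose Lipschitz constant $1$ is attained at that very pair, so no limit $R\to\infty$ is needed; (ii) your appeal to Proposition \ref{prop:Gar2004} is actually superfluous, since your estimates use only that $f\circ\varphi=c^{-1}Tf\in\lip(Y)$ for the specific test functions $f$, whereas the paper genuinely needs that proposition because its argument manipulates $L(\varphi)$ as a finite quantity.
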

\begin{proof}
We verify the case when $T$ sends $\lip(X)$ onto $\lip(Y)$.  The other case follows similarly.

Since $T$ preserves Lipschitz constants, so does its inverse; indeed,
$$
L(g)=L(TT^{-1}g)=L(T^{-1}g), \quad\text{for all $g$ in $\lip(Y)$.}
$$
Observe that
$$
0=L(1)=L(T1)= L(h) =\sup_{u\neq v}\frac{|h(u)-h(v)|}{d_Y(u,v)}.
$$
Hence $h= \pm1/\alpha$ is a nonzero constant function for a scalar $\alpha>0$.

For any $f$ in $\lip(X)$,
we have $f\circ \varphi=\pm\alpha  Tf\in\lip(Y)$.
It follows from Proposition \ref{prop:Gar2004} that $\varphi$ is Lipschitz.
Since $T$ is bijective, $\varphi$ is one-to-one with dense range.
 Let $X_0=\varphi(Y)$, which is a dense subset of the metric space $X$.
Define a bijective linear map $S: \lip(Y)\to \lip(X_0)$ such that $Sg=T^{-1}g\mid_{X_0}$;
in other words,
$Sg = \pm\alpha  g\circ\psi$, where $\psi$ is the
 bijection from $X_0$ onto $Y$ defined by the condition $\psi(x)=y$ whenever $x=\varphi(y)$.
 Then a similar argument shows that $\psi$ is  Lipschitz from  $X_0$ onto $Y$.
Let $\overline{\varphi}: \overline{Y}\to \overline{X}$ and
$\overline{\psi}: \overline{X}\to \overline{Y}$ be the unique Lipschitz extensions of $\varphi$ and $\psi$
between the metric completions $\overline{X}, \overline{Y}$ of $X,Y$, respectively.
For any $x\in \overline{X}$, let $x_n\in X_0$ such that $x_n\to x$, we have
$\overline{\varphi}(\overline{\psi}(x))= \lim_n \overline{\varphi}(\overline{\psi}(x_n))
= \lim_n \varphi(\psi(x_n))= \lim_n x_n = x$.  It amounts to saying that $\overline{\varphi}\circ\overline{\psi}= I_{\overline{X}}$.
In a similar manner, we see that $\overline{\psi}\circ\overline{\varphi}= I_{\overline{Y}}$, and thus
$\overline{\varphi}^{-1}=\overline{\psi}$ is also Lipschitz.

On the other hand, the fact
$$
L(f)=L(Tf)=L(\pm\alpha^{-1} f\circ\varphi) \leq \alpha^{-1}L(f)L(\varphi), \quad\forall f\in \lip(X),
$$
implies that $L(\varphi) \geq  \alpha$.
Assume $L(\varphi)> \alpha$. Then there are $p$ and $q$ in $Y$, such that
$$
d_X(\varphi(p),\varphi(q)) >  \alpha d_Y(p,q).
$$
Define $\tilde{f}:X\to \mathbb{R}$ by
$$
\tilde{f}(x)=\min \{d_X(x,\varphi(p)), d_X(\varphi(p),\varphi(q))\}.
$$
Then
\begin{align*}
|\tilde{f}(x)-\tilde{f}(y)|&\leq |d(x,\varphi(p)) - d(y,\varphi(p))|\\
                      &\leq d_X(x,y), \quad\text{for all $x$, $y$ in $X$,}
\end{align*}
and
$$
\|\tilde{f}(\varphi(q))-\tilde{f}(\varphi(p))\|=d(\varphi(q),\varphi(p)).
$$
Hence, $\tilde{f}\in \lip^b(X)$ with $L(\tilde{f})=1$.
But
\begin{align*}
|T\tilde{f}(p)-T\tilde{f}(q)|
=\, &  |h(p)\tilde{f}\circ\varphi(p)-h(q)\tilde{f}\circ\varphi(q) |\\
=\, &  \alpha^{-1}  d_X(\varphi(p),\varphi(q))\\
>\, & d_Y(p,q).
\end{align*}
This implies $L(T\tilde{f})>1$, which is a contradiction. Hence $L(\varphi)= \alpha$. Similarly, $L(\psi)=\alpha^{-1}$.

If $u\neq v$ in $Y$ and $\varphi(u)=s$, $\varphi(v)=t$ in $X$, then
$$
\alpha=L(\varphi)\geq\frac{d_X(\varphi(u),\varphi(v))}{d_Y(u,v)}
=\frac{d_X(s,t)}{d_Y(\psi(s),\psi(t))}\geq \frac{1}{L(\psi)}= \alpha,
$$
and thus
$$
d(\varphi(u),\varphi(v))=\alpha d_Y(u,v).
$$
In other words, $\varphi$ is an $\alpha$-dilation.

Finally, if $Y=\overline{Y}$ is complete then $X_0=\varphi(Y)$ is also complete, and thus $\varphi(Y)=\overline{X_0}=X$.
\end{proof}

\begin{cor}\label{cor:wcL-cv}
  Assume either that  $U,V$ are open sets and $U$ is connected, or that
   both $U,V$ are convex bodies, in normed linear spaces $E, F$, respectively.
  Let the weighed composition operator $Tf=h\cdot f\circ\varphi$ define a bijection
  from $\lip(U)$ onto $\lip(V)$ preserving Lipschitz constants.
  Then there is a  linear isometry $A:  F\to E$ with dense range, a scalar $\alpha>0$ and a vector $b\in E$ such that
  $\varphi(y) = \alpha Ay + b$
  for all $y\in V$, and $h$ is a constant function assuming value $\pm \alpha^{-1}$.
  In other words,
  $$
  Tf(y) = \pm \alpha^{-1} f(\alpha Ay + b), \quad\text{for all $f\in \lip(U)$ and for all $y\in V$.}
  $$
\end{cor}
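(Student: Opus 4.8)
The plan is to reduce everything to the Mankiewicz and Mazur--Ulam theorems (Proposition \ref{Man1972}) after first extracting the metric structure of $\varphi$ from Theorem \ref{thm:wecomop}. Since $T\colon\lip(U)\to\lip(V)$ is a bijective Lipschitz-constant-preserving weighted composition operator, Theorem \ref{thm:wecomop} applies with $X=U$ and $Y=V$, so that $\varphi\colon V\to U$: it produces a scalar $\alpha>0$ such that $h\equiv\pm1/\alpha$ is constant and $\varphi$ is an $\alpha$-dilation of $V$ onto a dense subset $U_0=\varphi(V)$ of $U$. Rescaling, $\psi:=\alpha^{-1}\varphi$ is then an isometric embedding of $V$ into $E$, and it extends to a surjective isometry $\overline{\psi}\colon\overline{V}\to\overline{U}$ between the metric completions, which I identify with the closures of $V$ and $U$ in the Banach completions $\overline{F}$ and $\overline{E}$ of $F$ and $E$. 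Once $\psi$ is shown to be the restriction of an affine isometry $y\mapsto Ay+\alpha^{-1}b$, the asserted formula for $T$ follows at once.

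When $U,V$ are convex bodies the argument is clean. The closure of a convex body in a normed space is again a convex body: convexity passes to closures, and a ball witnessing nonempty interior persists as a ball in the completion. Hence $\overline{V}$ and $\overline{U}$ are convex bodies in the Banach spaces $\overline{F},\overline{E}$, so Mankiewicz's theorem (Proposition \ref{Man1972}(b)) extends the surjective isometry $\overline{\psi}$ to a surjective isometry $\Phi\colon\overline{F}\to\overline{E}$, and the Mazur--Ulam theorem (Proposition \ref{Man1972}(a)) makes $\Phi$ affine, say $\Phi(y)=Ay+c$ with $A\colon\overline{F}\to\overline{E}$ a surjective linear isometry. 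Restricting to $V$ gives $\alpha^{-1}\varphi(y)=Ay+c$, that is $\varphi(y)=\alpha Ay+\alpha c$, and I set $b=\alpha c$.

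The open connected case is the delicate one, because the image $\psi(V)=\alpha^{-1}U_0$ is only dense, not open, so Mankiewicz cannot be invoked directly. My plan is to argue locally and then globalize. Fix $v_0\in V$; since $\varphi(v_0)\in U$ and $U$ is open, there is $s>0$ with the closed $\overline{E}$-ball of radius $s$ about $\varphi(v_0)$ contained in $\overline{U}$, so for a small ball $B=B_F(v_0,r)\subseteq V$ with $\alpha r\le s$ the $\alpha$-dilation $\overline{\varphi}$ carries the closed ball $\overline{B}_{\overline{F}}(v_0,r)$ \emph{onto} the closed ball of radius $\alpha r$ about $\varphi(v_0)$ in $\overline{E}$. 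Both are genuine convex bodies, so the convex-body form of Mankiewicz together with Mazur--Ulam shows $\overline{\varphi}$ is affine on this ball; hence $\psi$ is locally affine on $V$, with a locally constant linear part. The remaining task is to promote this to a single global linear isometry $A$, and here I would use connectedness: the interior $\widetilde{U}:=\mathrm{int}_{\overline{E}}(\overline{U})$ is open and, because $U$ is connected and dense in it, connected, so transporting the local affine pieces through $\widetilde{U}$ should force their linear parts to coincide. This gluing, i.e.\ the invariance-of-domain phenomenon that blocks a naive direct application of Mankiewicz and the need to rule out distinct linear parts on distinct pieces of $V$, is the step I expect to be the main obstacle.

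Finally I would check that $A$ and $b$ have the asserted form. Writing $\varphi(y)=\alpha Ay+b$ on $V$ with $A\colon\overline{F}\to\overline{E}$ a surjective linear isometry, I evaluate on differences: for $y,y'\in V$ one has $A(y-y')=\alpha^{-1}\bigl(\varphi(y)-\varphi(y')\bigr)\in E$, and since $V$ is open (or a convex body) the difference set $\{y-y'\colon y,y'\in V\}$ contains a ball about $0$ in $F$ and hence spans $F$; by linearity $A(F)\subseteq E$. Then $b=\varphi(y)-\alpha Ay\in E$ for any $y\in V$. Since $A(F)$ is dense in $A(\overline{F})=\overline{E}$, the restriction $A|_F\colon F\to E$ is a linear isometry with dense range, which is the map $A$ of the statement.
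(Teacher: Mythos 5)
Your reduction via Theorem \ref{thm:wecomop}, your convex-body case, and your final bookkeeping (that $A$ restricts to a linear isometry $F\to E$ with dense range and that $b\in E$) are all correct, and the convex-body half coincides with the paper's own argument: the closures $\overline{U},\overline{V}$ in the Banach completions are convex bodies, so Proposition \ref{Man1972} applies directly. Your local argument in the open case is also correct as far as it goes: $\overline{\varphi}$ does carry small closed balls centred at points of $V$ onto closed balls of $\overline{E}$, so the convex-body form of Mankiewicz plus Mazur--Ulam makes $\varphi$ affine in a neighborhood of each point of $V$.

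But the open connected case is not finished, and you say so yourself: the globalization is left as something that ``should'' work. This is a genuine gap, not a routine verification. Since only $U$ is assumed connected while $V$ may have many components, ``locally affine with locally constant linear part'' does not glue over $V$; and transporting the affine pieces through $U$ runs into exactly the difficulty you name. Concretely, if two components $V_1,V_2$ of $V$ carried distinct affine dilations $\Psi_1\neq\Psi_2$, the global $\alpha$-dilation identity for $\varphi$ gives only that $\Psi_1(y)$ and $\Psi_2(y)$ are equidistant from every point of a set dense in a nonempty open subset of $\overline{E}$; in a general normed space the bisector of two distinct points can have nonempty interior, so this does not force $\Psi_1=\Psi_2$. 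The paper closes this case by a different device, which your proposal never establishes: it asserts (as plain) that the bijective $\alpha$-dilation $\overline{\varphi}:\overline{V}\to\overline{U}$ maps $\mathrm{int}_{\overline{F}}(\overline{V})$ onto $\mathrm{int}_{\overline{E}}(\overline{U})$, notes that $\mathrm{int}_{\overline{E}}(\overline{U})$ is open and connected (it lies between the connected set $U$ and $\overline{U}$), and then applies the open-set version of Proposition \ref{Man1972}(b) once, on the side of the connected set, with no local-to-global gluing at all. To complete your proof you must either prove that interior-onto-interior statement (this is precisely the invariance-of-domain-type issue you flagged, now for the completed dilation, and it is where the density of $\varphi(V)$ in $U$ has to be used) or supply an actual gluing argument; as written, the key idea is missing.
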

\begin{proof}
By Theorem \ref{thm:wecomop}, we know that $h=\pm\alpha^{-1}$ is a constant function for some $\alpha>0$, and
$\varphi$ is an $\alpha$-dilation from $V$ onto a  dense subset of $U$.
Indeed, $\varphi$ extends uniquely to an $\alpha$-dilation $\overline{\varphi}$ from $\overline{V}$ onto
$\overline{U}$, where $\overline{U},\overline{V}$ are the closures
 of $U,V$ in the Banach space completions $\overline{E}, \overline{F}$ of $E,F$, respectively.

If $V$ is connected in $F$, then so is the interior of $\overline{V}$ in $\overline{F}$.
It is plain that $\overline{\varphi}$ sends the interior of $\overline{V}$ in the Banach space $\overline{F}$
onto the interior of $\overline{U}$ in the Banach space $\overline{E}$.
Then $\overline{\varphi}$ extends uniquely to an affine $\alpha$-dilation from $\overline{F}$
onto $\overline{E}$ by Proposition \ref{Man1972}.
In the other case,  $U, V$ are both convex bodies in $E,F$,
and thus so are $\overline{U},\overline{V}$ in the Banach spaces  $\overline{E}, \overline{F}$, respectively.
 It follows from Proposition \ref{Man1972} again that $\overline{\varphi}$ extends uniquely to an
 affine $\alpha$-dilation from $\overline{F}$ onto $\overline{E}$.
  In both cases, $\varphi$ extends to a unique
 affine $\alpha$-dilation from $F$ into $E$ with dense range.  The assertion follows.
\end{proof}

\begin{cor}\label{cor:wcL-cv-bdd}
  Let $U,V$ be convex bodies in normed linear spaces $E, F$, respectively.  Let
  the weighed composition operator $Tf=h\cdot f\circ\varphi$ define a bijection
  from $\lip^b(U)$ onto $\lip^b(V)$ preserving Lipschitz constants.
  Then there is a surjective linear isometry $A:  F\to E$, a scalar $\alpha>0$ and a vector $b\in E$ such that
  $\varphi(y) = \alpha Ay + b$
  for all $y\in V$, and $h$ is a constant function assuming value $\pm \alpha^{-1}$.
  In other words,
  $$
  Tf(y) = \pm \alpha^{-1} f(\alpha Ay + b), \quad\text{for all $f\in \lip(U)$ and for all $y\in V$.}
  $$
\end{cor}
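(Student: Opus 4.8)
The plan is to reproduce, in the bounded setting, the reduction already carried out in Theorem~\ref{thm:wecomop} and Corollary~\ref{cor:wcL-cv}, and then to add the one genuinely new feature: surjectivity of the isometry. Since $U$ and $V$ are convex bodies, any two of their points are joined by a straight segment realizing the distance, so $U,V$ are quasi-convex and the $\lip^b$ half of Theorem~\ref{thm:wecomop} applies. It gives $h=\pm\alpha^{-1}$ for a constant $\alpha>0$ and shows $\varphi$ is an $\alpha$-dilation of $V$ onto a dense subset $U_0=\varphi(V)$ of $U$. From here I would argue exactly as in the proof of Corollary~\ref{cor:wcL-cv} --- an argument that uses only that $\varphi$ is an $\alpha$-dilation between convex bodies --- passing to the Banach completions $\overline E,\overline F$ and invoking the Mankiewicz theorem (Proposition~\ref{Man1972}(b)) to extend $\varphi$ to an affine $\alpha$-dilation. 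This produces a linear isometry $A\colon F\to E$ of dense range and a vector $b\in E$ with $\varphi(y)=\alpha Ay+b$, hence $Tf(y)=\pm\alpha^{-1}f(\alpha Ay+b)$. The whole assertion is then in hand except that I must improve ``dense range'' to ``onto.''

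The conversion of geometric surjectivity into algebraic surjectivity is the soft part, and I would dispatch it first. Suppose it is known that $\varphi(V)=U$, i.e.\ $\varphi$ maps $V$ \emph{onto} $U$. Then every $x\in U$ can be written $x=\alpha Ay+b$ with $y\in V$, so $\alpha^{-1}(x-b)\in A(F)$; as $x$ ranges over the convex body $U$, the points $\alpha^{-1}(x-b)$ fill the set $\alpha^{-1}(U-b)$, the image of $U$ under a homeomorphism of $E$, which therefore has nonempty interior. Thus the linear subspace $A(F)$ of $E$ contains a nonempty open set; a subspace containing an open ball about one of its points contains that point and all sufficiently small vectors, hence all of $E$. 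Therefore $A(F)=E$ and $A$ is a surjective linear isometry, finishing the proof.

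The crux --- and the step I expect to be the main obstacle --- is thus the claim $\varphi(V)=U$, for with only dense range the set $U_0=\varphi(V)=\alpha A(V)+b$ has empty interior whenever $A(F)$ is a proper dense subspace, and the subspace argument collapses. Here I would lean on the final clause of Theorem~\ref{thm:wecomop}: if the source $V$ is complete, then the dilation $\varphi$ is automatically onto, $\varphi(V)=U$. So the real content is to certify that this completeness is available, after which the linear-algebra step above closes everything. I would also flag that the completeness is genuinely indispensable: taking $E=c_0$, letting $F$ be the finitely supported real sequences with the supremum norm, and letting $U,V$ be the respective open unit balls, the restriction map $Tf=f\circ\varphi$ along the inclusion $\varphi\colon V\hookrightarrow U$ is a Lipschitz-constant-preserving weighted composition bijection of $\lip^b(U)$ onto $\lip^b(V)$ whose isometry $A$ is the inclusion $F\hookrightarrow c_0$, of dense but proper range. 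Hence the surjectivity in the conclusion rests squarely on the completeness present in the situation at hand, and certifying it is where the work lies.
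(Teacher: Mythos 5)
Your constructive steps coincide with the paper's entire proof: the paper notes only that convex bodies in normed spaces are quasi-convex (so the $\lip^b$ part of Theorem \ref{thm:wecomop} applies) and then declares that ``the proof now goes the same way as in Corollary \ref{cor:wcL-cv}'' --- which is exactly your reduction, via the completions $\overline E,\overline F$ and Mankiewicz's theorem, to $h\equiv\pm\alpha^{-1}$ and $\varphi(y)=\alpha Ay+b$ with $A\colon F\to E$ a linear isometry having dense range. Up to that point you and the paper are in complete agreement.

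Where you part ways, you are right and the paper is not. The paper gives no argument at all for the word ``surjective'' in the conclusion: Corollary \ref{cor:wcL-cv}, to which its proof defers, asserts only dense range, and Theorem \ref{thm:wecomop} upgrades $\varphi(Y)$ dense to $\varphi(Y)=X$ only when the domain $Y$ is complete --- a property a convex body in a normed (or even Banach) space generally lacks. Your reduction of surjectivity of $A$ to $\varphi(V)=U$ is sound (a subspace containing a nonempty open set is everything), and your $c_0$ versus $c_{00}$ example genuinely refutes the statement as printed: $V$ is dense in $U$, every $g\in\lip^b(V)$ extends uniquely to the closure $\overline V\supseteq U$ with the same Lipschitz constant and the same bound, so the restriction map $f\mapsto f|_V$ is a bijection from $\lip^b(U)$ onto $\lip^b(V)$ preserving Lipschitz constants, of weighted composition form with $h\equiv 1$ and $\varphi$ the inclusion; and since $V$ has nonempty interior in $F$, the affine representation of $\varphi$ is unique, so its isometry $A$ is forced to be the non-surjective inclusion $c_{00}\hookrightarrow c_0$. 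Consequently no proof of the corollary as stated can exist. The statement should either claim only a linear isometry with dense range (matching Corollary \ref{cor:wcL-cv}), or add the hypothesis that $F$ is a Banach space, in which case your scheme does close: $A(F)$ is then the isometric image of a complete space, hence a closed subspace of $E$, and being dense it equals $E$ (forcing $E$ to be complete as well). In short, your proposal proves everything that is provable here and correctly isolates, and refutes, the one unjustified claim; the gap lies in the paper's statement, not in your argument.
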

\begin{proof}
  Note that convex bodies in normed spaces are quasi-convex.  The proof now goes the same way as in Corollary \ref{cor:wcL-cv}.
\end{proof}

\begin{exam}\label{exam:wecomopRn}\label{thm:wecomop[0,1]}\label{thm:wecomop[a,b]}
(a) Let $Tf=h\cdot f\circ \varphi$ define a bijection $T:\lip(\mathbb{R}^n)\to \lip(\mathbb{R}^n)$
preserving Lipschitz constants.
  It follows from Theorem \ref{thm:wecomop} that $\varphi$ is Lipschitz, and indeed an $L(\varphi)$-dilation of $\mathbb{R}^n$.
Moreover,  $h$ is  the constant function assuming either $1/L(\varphi)$ or $-1/L(\varphi)$.
By Proposition \ref{Man1972},  there is an $n\times n$ orthogonal matrix $A$ such that
$\varphi(x)=L(\varphi)Ax+\varphi(0)$.  Let $\alpha=L(\varphi)$ and $b=\varphi(0)$, we have
$$
Tf(x) = \pm \alpha^{-1}f(\alpha Ax+b) \quad\text{for all $x\in \mathbb{R}^n$.}
$$

(b)
Let $Tf=h\cdot f\circ \varphi$ define a bijection  $T:\lip([0,1])\to \lip([0,1])$  preserving Lipschitz constants.
Applying Theorem  \ref{thm:wecomop}, we see that $\varphi$ is an $L(\varphi)$-dilation of $[0,1]$.
This forces $L(\varphi)=1$, and  either
$\varphi(x)=x$  or $\varphi(x)=1-x$.
Consequently, $T$ is given by
$$
Tf(x) = \pm f(x)\quad\text{or}\quad Tf(x)= \pm f(1-x).
$$

(c)  More generally,
let $Tf=h\cdot f\circ \varphi$ define a bijection $T:\lip([a,b])\to \lip([c,d])$  preserving Lipschitz constants.
Then $L(\varphi)=\frac{b-a}{d-c}$,
$h=\pm \frac{d-c}{b-a}$, and either
$$
\varphi(x)=\frac{b-a}{d-c}(x-c)+a\quad\text{or}\quad \varphi(x)=\frac{b-a}{d-c}(d-x)+a.
$$
Consequently,
$$
T(f) = \pm \frac{d-c}{b-a} f\left(\frac{b-a}{d-c}(x-c)+a\right), \quad\text{or}\quad
T(f) = \pm \frac{d-c}{b-a} f\left(\frac{b-a}{d-c}(d-x)+a\right).
$$

(d)
Let $T: \lip([0,1]^n)\to \lip([0,1]^n)$ be a bijective weighted composition operator preserving Lipschitz constants.
It follows from Corollary \ref{cor:wcL-cv} and the structure of the symmetric group of the $n$-cube that
$$
Tf(x) = \pm f(Px+b) \quad\text{for all $x\in [0,1]^n$,}
$$
for an $n\times n$  signed permutation matrix $P$ (in
the sense that exactly one entry in each row and each column is $\pm 1$, and  elsewhere $0$),
 and a vector $b\in \mathbb{R}^n$ in which all entries are either $0$ or $1$.

(e) There are similar versions for bounded Lipschitz functions in all above examples. The conclusions are identical.
\end{exam}

\section{Weighted composition operators preserving  local/pointwise Lipschitz constants}

We say that a weighted composition operator $Tf=h\cdot f\circ\varphi$ from  $\lip^\loc(X)$ into $\lip^\loc(Y)$,
or from $\lip^\pt(X)$ into $\lip^\pt(Y)$, preserves
the local or pointwise Lipschitz constants (with respect to $\varphi$), if
$$
\Lloc_p(Tf)=\Lloc_{\varphi(p)}(f)\quad \text{for all $f\in \lip(X)$ and all $p\in Y$,}
$$
or
$$
\Lpt_p(Tf)=\Lpt_{\varphi(p)}(f)\quad \text{for all $f\in \lip(X)$ and all $p\in Y$.}
$$

In this section we consider  weighted composition operators
between locally/pointwise Lipschitz functions on Euclidean spaces preserving local/pointwise Lipschitz constants.

\begin{prop}\label{prop:lip-loc-pt-comp}
Let $(X,d_X)$ and $(Y,d_Y)$ be metric spaces without isolated point, and let $\varphi:Y\to X$ be a map.
Suppose that
$$
f\circ \varphi\in \lip^\loc(Y) \ \text{{\rm(}resp.\ $\lip^\pt(Y)${\rm)}
 \quad for all $f\in \lip^\loc(X)$  {\rm(}resp.\ $\lip^\pt(X)${\rm)}.}
$$
 Then $\varphi$ is locally (resp.\ pointwise) Lipschitz from $Y$ into $X$.
 If the composition map $f\mapsto f\circ\varphi$ preserves local (resp.\ pointwise) Lipschitz constants
 then $\Lloc_q(\varphi)=1$ (resp.\ $\Lpt_q(\varphi)=1$) for every $q\in Y$.
\end{prop}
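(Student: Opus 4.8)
The plan is to extract all the geometry from the hypothesis by feeding carefully chosen $1$-Lipschitz functions into the composition, treating the pointwise and local cases in parallel. First I would record that $\varphi$ is automatically continuous: for every $a\in X$ the function $d_X(\cdot,a)$ is $1$-Lipschitz, hence lies in $\lip^\loc(X)$ and in $\lip^\pt(X)$, so $u\mapsto d_X(\varphi(u),a)$ is continuous; choosing $a=\varphi(u_0)$ shows $\varphi(u_n)\to\varphi(u_0)$ whenever $u_n\to u_0$. This continuity is what lets me say that the images of points clustering near $q$ cluster near $p:=\varphi(q)$.

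The pointwise case is clean and is driven entirely by the single test function $f=d_X(\cdot,\varphi(q))$. Since $f$ is $1$-Lipschitz, a direct computation gives $\Lpt_q(f\circ\varphi)=\limsup_{u\to q} d_X(\varphi(u),\varphi(q))/d_Y(u,q)=\Lpt_q(\varphi)$, and finiteness of the left side (guaranteed by $f\circ\varphi\in\lip^\pt(Y)$) shows $\varphi$ is pointwise Lipschitz. If the composition preserves pointwise constants, then $\Lpt_q(\varphi)=\Lpt_q(f\circ\varphi)=\Lpt_{\varphi(q)}(f)$, and because $X$ has no isolated points one computes $\Lpt_{\varphi(q)}(d_X(\cdot,\varphi(q)))=1$, giving $\Lpt_q(\varphi)=1$.

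For the local case, the first assertion I prove by contradiction. If $\Lloc_q(\varphi)=\infty$ there are pairs $u_n,v_n\to q$ with $d_X(\varphi u_n,\varphi v_n)/d_Y(u_n,v_n)\to\infty$, whose images $s_n=\varphi(u_n)$, $t_n=\varphi(v_n)$ accumulate at $p$. Letting $w_n$ be whichever of $s_n,t_n$ is farther from $p$ (so $d_X(w_n,p)\ge\delta_n/2$ with $\delta_n=d_X(s_n,t_n)$), the cone $g_n(x)=(\delta_n/3-d_X(x,w_n))^+$ is $1$-Lipschitz, peaks at $w_n$, vanishes at the companion point, and keeps $p$ outside its support. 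After passing to a subsequence along which the scales $\delta_n$ decrease fast, a routine extraction (peeling balls off $p$) makes the supports pairwise disjoint, so $f=\sum_n g_n$ is globally $1$-Lipschitz; yet $|f(s_n)-f(t_n)|/d_Y(u_n,v_n)\gtrsim\delta_n/d_Y(u_n,v_n)\to\infty$ forces $\Lloc_q(f\circ\varphi)=\infty$, a contradiction. For the sharp constant the lower bound $\Lloc_q(\varphi)\ge1$ is easy: with $f=d_X(\cdot,p)$ the reverse triangle inequality gives $\Lloc_q(f\circ\varphi)\le\Lloc_q(\varphi)$, while preservation forces $\Lloc_q(f\circ\varphi)=\Lloc_p(f)=1$.

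The main obstacle is the upper bound $\Lloc_q(\varphi)\le1$, and it is genuinely geometric. Unlike the pointwise constant, $\Lloc_q(\varphi)$ may be realized by pairs whose image displacements point in directions that never settle, so no fixed $d_X(\cdot,a)$ captures the extremal stretch; moreover a naive $1$-Lipschitz bump realizing the full separation $\delta_n$ is forced to swallow $p$ and so cannot be kept disjoint across scales. The target is a single $1$-Lipschitz $f$ with $|f(s_n)-f(t_n)|/d_Y(u_n,v_n)>1$ for infinitely many $n$ accumulating at $q$, for then $\Lloc_q(f\circ\varphi)>1=\Lloc_p(f)$ contradicts preservation. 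In the Euclidean setting of this section the obstruction dissolves: passing to a subsequence along which the unit vectors $(s_n-t_n)/\delta_n$ converge, by compactness of the sphere, to some $e$, the affine function $f(x)=\langle x-p,e\rangle$ is $1$-Lipschitz and satisfies $|f(s_n)-f(t_n)|/d_Y(u_n,v_n)\to\delta_n/d_Y(u_n,v_n)$ along the subsequence, which exceeds $1$. For a general metric space $X$ one must instead build $f$ by a McShane-type $1$-Lipschitz extension prescribing near-extremal values on the accumulating set $\{s_n,t_n\}$; the crux of the whole argument is to choose the subsequence and a small slack in the prescribed values so that these values stay $1$-Lipschitz on that set near $p$, which is precisely the cross-scale compatibility that naive assignments violate.
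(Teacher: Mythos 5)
Your pointwise argument is correct and is essentially the paper's own proof: the paper tests against the truncated radial function $f_q=\min\{d_X(\cdot,\varphi(q)),1\}$ where you test against $d_X(\cdot,\varphi(q))$, and both exploit the absence of isolated points to get $\Lpt_{\varphi(q)}(f_q)=1$; your local lower bound $\Lloc_q(\varphi)\geq 1$ likewise matches the paper's computation. Where you genuinely diverge is the first assertion in the local case: the paper simply cites \cite[Lemma 3.15]{Gar2004}, whereas your sum of disjointly supported cones $g_n(x)=(\delta_n/3-d_X(x,w_n))^+$ is a self-contained substitute. That construction works, with one caveat you should make explicit: the peeling extraction must force the whole new pair $s_{n_k},t_{n_k}$ (not merely the new support) into a ball around $p$ disjoint from all earlier supports; otherwise a companion point could land inside an earlier, larger bump, and then $|f(s_{n_k})-f(t_{n_k})|$ need not be comparable to $\delta_{n_k}$. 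This is a one-line fix, since $d_X(s_{n_k},p),d_X(t_{n_k},p)\to 0$.

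The genuine gap is exactly where you locate it: the upper bound $\Lloc_q(\varphi)\leq 1$ under preservation of local Lipschitz constants. The proposition is stated for arbitrary metric spaces $X$ without isolated points, but your only completed argument for this bound (compactness of the unit sphere applied to $(s_n-t_n)/\delta_n$, then the linear functional $\langle\,\cdot-p,e\rangle$) uses the linear and Euclidean structure of $X$; for general $X$ you describe a McShane-type extension ``with small slack'' and then stop, explicitly calling the cross-scale compatibility ``the crux of the whole argument.'' Identifying the crux is not proving it, so as submitted the proposal does not prove the stated proposition (it would suffice for the paper's later application to open sets of $\mathbb{R}^n$ in Theorem 3.5, but not for Proposition 3.1 itself). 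To be fair, the paper is also terse here: it computes $\Lloc_{\varphi(q)}(f_q)=1$ and asserts the rest follows ``in a similar fashion,'' even though a single radial function cannot detect tangential displacements --- your diagnosis of that obstruction is sharper than the paper's discussion. Note, however, that the difficulty dissolves inside your own framework if you replace the single cone at the farther point $w_n$ by the dipole $g_n(x)=(\rho_n-d_X(x,w_n))^+-(\rho_n'-d_X(x,c_n))^+$ with $\rho_n+\rho_n'\leq\delta_n$: such a dipole is globally $1$-Lipschitz whenever $\rho_n+\rho_n'\leq d_X(w_n,c_n)$, and since $d_X(w_n,p)+d_X(c_n,p)\geq\delta_n$ one can choose $\rho_n<d_X(w_n,p)$, $\rho_n'<d_X(c_n,p)$ with $\rho_n+\rho_n'\geq(1-\eta)\delta_n$, keeping $p$ outside both supports. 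The same peeling extraction then produces a single $1$-Lipschitz $f$ with $\Lloc_q(f\circ\varphi)\geq(1-\eta)\Lloc_q(\varphi)>1\geq\Lloc_{\varphi(q)}(f)$, contradicting preservation; this closes the gap in full metric-space generality.
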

 \begin{proof}
Let $q\in Y$, and
   let $f_q(x)=\min \{d_X(x, \varphi(q)), 1\}$.  Then $f_q$ is   bounded and pointwise Lipschitz on $X$.
   In particular, $\Lpt_{\varphi(q)}(f_q) = 1$.
    Since $q$ is not an isolated point in $Y$, we see that $d_X(\varphi(y),\varphi(q))<1$ eventually when $y\to q$. Hence,
\begin{align*}
   \Lpt_q(f_q\circ \varphi) &= \limsup_{y\to q} \frac{|f_q(\varphi(y))-f_q(\varphi(q))|}{d_Y(y,q)}\\
   &=  \limsup_{y\to q} \frac{\min\{d_X(\varphi(y), \varphi(q)),1\}}{d_Y(y,q)} <+\infty.
\end{align*}
   Thus,
   $$
   \Lpt_q(\varphi)= \limsup_{y\to q} \frac{d_X(\varphi(y), \varphi(q))}{d_Y(y,q)}= \Lpt_q(f_q\circ \varphi)<+\infty.
   $$
Since this holds for every point $q$ in $Y$, we see that $\varphi$ is pointwise Lipschitz from $Y$ into $X$.
Moreover, $\Lpt_q(\varphi)=1$ for all $q\in Y$ if the composition map
$f\mapsto f\circ\varphi$ preserves pointwise Lipschitz constants.

The case  for local Lipschitzness is proved in \cite[Lemma 3.15]{Gar2004}.  As an alternative proof, consider
\begin{align*}
\Lloc_{\varphi(q)}(f_q) &= \limsup_{y,z\to \varphi(q)} \frac{|f_q(y)-f_q(z)|}{d_X(y,z)}\\
&\leq \limsup_{y,z\to \varphi(q)} \frac{|d_X(y,\varphi(q))-d_X(z,\varphi(q))|}{d_X(y,z)}\leq 1,
\end{align*}
while, in general,
$$
\Lloc_{\varphi(q)}(f_q) \geq \Lpt_{\varphi(q)}(f_q) = 1.
$$
Thus, $\Lloc_{\varphi(q)}(f_q)=1$.  In a similar fashion, we can verify that $\varphi$ is locally Lipschitz,
and $\Lloc_q(\varphi)=1$ when the composition map $f\mapsto f\circ\varphi$ preserves local Lipschitz constants.
 \end{proof}

\begin{lem}\label{lem:loca[a,b]}
Let $\varphi:[a,b]\to \mathbb{R}$ be a locally (resp.\ pointwise) Lipschitz function satisfying that
$$
\Lloc_p(\varphi) =\alpha \quad \text{(resp.\ $\Lpt_p(\varphi)=\alpha$)}, \quad \text{for all $p\in[a,b]$.}
$$
If $\alpha=0$ then $\varphi(x)=c$ for some fixed scalar $c$.
In general, if $\varphi$ is injective
then $\varphi(x)=\alpha x+c$ or $\varphi(x)=-\alpha x+c$ on $[a,b]$ for some fixed scalar $c$.
\end{lem}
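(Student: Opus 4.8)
The plan is to reduce the whole statement to the pointwise case, and that in turn to the subcase $\alpha=0$. A pointwise (hence also a locally) Lipschitz function is continuous, so an injective $\varphi$ on $[a,b]$ is automatically \emph{strictly monotone}; after replacing $\varphi$ by $-\varphi$ if necessary I may assume $\varphi$ is strictly increasing, the decreasing case producing the slope $-\alpha$. I first dispose of $\alpha=0$ (where injectivity is not needed): since $\Lpt_p(\varphi)\le\Lloc_p(\varphi)$, the hypothesis forces $\limsup_{x\to p}|\varphi(x)-\varphi(p)|/|x-p|=0$ for every $p$ in both settings, which says precisely that $\varphi$ is differentiable everywhere with $\varphi'\equiv 0$; the mean value theorem then makes $\varphi$ constant.

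For the pointwise case with $\alpha>0$ I would use that a monotone function is differentiable almost everywhere. At any point $p$ of differentiability the difference quotient converges, so $\Lpt_p(\varphi)=|\varphi'(p)|=\varphi'(p)$, and the hypothesis $\Lpt_p(\varphi)=\alpha$ gives $\varphi'=\alpha$ a.e. Now set $s(x)=\varphi(x)-\alpha x$. Because $\varphi$ is increasing, the fundamental inequality for monotone functions yields $\varphi(y)-\varphi(x)\ge\int_x^y\varphi'=\alpha(y-x)$ for $x<y$, so $s$ is increasing as well. As $\varphi$ and $s$ are increasing, their difference quotients are nonnegative, so the $\Lpt_p$ of each is simply the $\limsup$ of the signed quotient; from $(s(x)-s(p))/(x-p)=(\varphi(x)-\varphi(p))/(x-p)-\alpha$ I then obtain $\Lpt_p(s)=\Lpt_p(\varphi)-\alpha=0$ for every $p$. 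The already settled $\alpha=0$ case, applied to $s$, shows $s$ is constant, i.e.\ $\varphi(x)=\alpha x+c$.

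For the locally Lipschitz case I would first promote the hypothesis to \emph{global} $\alpha$-Lipschitzness: covering the compact interval $[a,b]$ by finitely many balls on each of which the difference quotients are at most $\alpha+\epsilon$ and chaining the estimates through the triangle inequality gives $L(\varphi)\le\alpha+\epsilon$ for every $\epsilon>0$, hence $L(\varphi)=\alpha$; in particular $\varphi$ is absolutely continuous with $|\varphi'|\le\alpha$ a.e. The plan is again to run the substitution $s(x)=\varphi(x)-\alpha x$ and invoke the $\alpha=0$ case, which now amounts to checking $\Lloc_p(s)=0$ for all $p$, equivalently $\varphi'=\alpha$ a.e. This last step is the one I expect to be the main obstacle: the local constant delivers only $\varphi'(p)=\Lpt_p(\varphi)\le\Lloc_p(\varphi)=\alpha$ at points of differentiability, so the delicate task is to upgrade this inequality to an equality almost everywhere. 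I would try to exploit that $\Lloc_p(\varphi)=\alpha$ forces the essential supremum of $|\varphi'|$ on every neighbourhood of $p$ to equal $\alpha$, together with the strict monotonicity of $\varphi$, to control the set $\{\varphi'<\alpha\}$; pinning down this set is where the real work of the local case concentrates.
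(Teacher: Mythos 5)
Your $\alpha=0$ argument and your pointwise-case argument are correct, but they take a genuinely different route from the paper's. The paper establishes \eqref{eq:lloc-bv} by a Lebesgue-number covering, concludes that $\varphi$ has bounded variation, hence is differentiable a.e., asserts that $|\varphi'|=\alpha$ a.e., and finishes by writing $\varphi(x)-\varphi(a)=\int_a^x\varphi'(t)\,dt$. As written, that last identity requires absolute continuity, not mere bounded variation (the Cantor function is monotone with derivative $0$ a.e.); the covering estimate does in fact give the stronger conclusion that $\varphi$ is globally $(\alpha+\epsilon)$-Lipschitz for every $\epsilon>0$, so this is repairable, but your substitution $s(x)=\varphi(x)-\alpha x$ avoids the issue entirely: Lebesgue's monotone differentiation theorem plus the identity $\Lpt_p(\varphi)=|\varphi'(p)|$ at points of differentiability give $\varphi'=\alpha$ a.e., the one-sided inequality $\varphi(y)-\varphi(x)\ge\int_x^y\varphi'$ (valid for every increasing function) makes $s$ increasing, the signed-quotient computation gives $\Lpt_p(s)=0$ everywhere, and your $\alpha=0$ case (everywhere-differentiability plus the mean value theorem, itself tidier than the paper's) concludes. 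Your route also needs no covering argument at all, whereas the paper's pointwise covering sketch has its own wrinkle: the centre $p_i$ of the ball containing $x_{i-1},x_i$ need not lie between them, so the broken sum is not obviously dominated by $(\alpha+\epsilon)(b-a)$.

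For the local case, the obstacle you flagged is real and cannot be overcome: the local half of the lemma with $\alpha>0$ is false as stated. Let $C\subset[a,b]$ be a fat Cantor set (closed, nowhere dense, of positive Lebesgue measure) and put
$$
\varphi(x)=\int_a^x \mathbf{1}_{[a,b]\setminus C}(t)\,dt.
$$
Then $\varphi$ is $1$-Lipschitz; it is strictly increasing, hence injective, because for $x<y$ the open set $(x,y)\setminus C$ is nonempty and so has positive measure; and $\Lloc_p(\varphi)=1$ for every $p$, because every neighbourhood of $p$ contains an open interval disjoint from $C$, on which $\varphi$ has slope exactly $1$. Yet $\varphi'=\mathbf{1}_{[a,b]\setminus C}$ vanishes a.e.\ on the positive-measure set $C$, so $\varphi(b)-\varphi(a)<b-a$ and $\varphi$ is not of the form $\pm x+c$. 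This example kills precisely the upgrade you were seeking (essential-supremum control of $|\varphi'|$ near every point does not force $|\varphi'|=\alpha$ a.e.), and it also exposes the gap in the paper's own proof: the assertion ``$|\varphi'(p)|=\alpha$ for almost all $p$'' is justified only under the pointwise hypothesis, where $|\varphi'(p)|=\Lpt_p(\varphi)$ at points of differentiability, and not under the local one. So you did not fail to find the local argument; there is none. (The damage is contained: the rest of the paper invokes this lemma only through Lemma \ref{lem:wf-const}, i.e.\ with $\alpha=0$, where both your argument and the paper's are sound.)
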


\begin{proof}
Suppose
$$
L_p^\loc(\varphi)=
\limsup_{\substack{x,y \to p\\ x,y\in [a,b]}}\frac{|\varphi(x)-\varphi(y)|}{|x-y|}=\alpha\quad\text{for all $p\in[a,b]$.}
$$
Let $\epsilon>0$. For any point $p\in[a,b]$,
there is an open interval $B(p,\delta_p)$ centered at $p$ with radius  $\delta_p>0$ such that
$$
\frac{|\varphi(x)-\varphi(y)|}{|x-y|}<\alpha+\epsilon \quad\text{for all $x, y\in B(p,\delta_p)\cap [a,b]$.}
$$
By the Lebesgue's number lemma, there is a $\delta>0$ such that
for any partition $[a,b]=\bigcup_{i=1}^m [x_{i-1},x_i]$ with all $|x_i - x_{i-1}|< \delta$, we have
$x_{i-1}, x_i\in B(p_i, \delta_{p_i})$ for some $p_i\in [a,b]$, and thus
\begin{align}\label{eq:lloc-bv}
 \sum_{i=1}^{m}|\varphi(x_{i})-\varphi(x_{i-1})|
              < & \sum_{i=1}^{m}(\alpha+\epsilon)|x_{i}-x_{i-1}| \\
              = & (\alpha+\epsilon)(b-a)<+\infty. \notag
\end{align}
In particular, $\varphi$ is of bounded variation on $[a,b]$.
Consequently, $\varphi$ is differentiable almost everywhere in $[a,b]$,
and $|\varphi'(p)|=\alpha$ for almost all $p\in[a,b]$.
If $\alpha=0$ then $\varphi$ assumes constant value $c$ on $[a,b]$.

Suppose $\varphi$ is injective now.
Since $\varphi$ is continuous and injective, it is monotone on $[a,b]$.  Thus,
$\varphi'=\alpha$ or $\varphi'=-\alpha$   almost everywhere on $[a,b]$.
Hence, in the sense of Lebesgue integral,
\begin{align*}
\varphi(x)-\varphi(a)=\int_{a}^{x}\varphi'(t)\,dt=\pm\alpha(x-a).
\end{align*}
Consequently,
$\varphi(x)=\alpha x+c$ or $\varphi(x)=-\alpha x+c$ on $[a,b]$, where $c = \mp \alpha a + \varphi(a)$.

The other case when
$$
\Lpt_p(\varphi)=\limsup_{x \to p}\frac{|\varphi(x)-\varphi(p)|}{|x-p|}=\alpha, \quad  \text{for all $p\in[a,b]$,}
$$
follows
 similarly, except that in \eqref{eq:lloc-bv} we might break $|\varphi(x_i)-\varphi(x_{i-1})| \leq
|\varphi(x_i)-\varphi(p_i)| + |\varphi(p_i)-\varphi(x_{i-1})|$ where $x_{i-1}, x_i\in B(p_i; \delta_{p_i})$.
\end{proof}

The following example tells us that the injectivity condition  is indispensable.

\begin{exam}
Define $\varphi:[0,1]\to [\frac{1}{2},1]$ by
$$\varphi(x)=\left\{\begin{array}{ll}
        x,   & \frac{1}{2}\leq x\leq1,\\
        1-x, & 0\leq x\leq\frac{1}{2}.
        \end{array}\right.$$
Then $L^\loc_x(\varphi)=1$ for all $x\in[0,1]$.
But $\varphi$ is not linear.
\end{exam}

\begin{lem}\label{lem:wf-const}
  Let $V$ be  either an open  connected  set or a convex body in a normed linear space.
  Let $h$ be locally or pointwise Lipschitz function on $V$ with
 $\Lloc_q(h) =0$ or $\Lpt_q(h)=0$ for all $q\in V$.  Then
  $h$ is a constant function on $V$.
\end{lem}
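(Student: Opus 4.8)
The plan is to reduce the statement to the one-dimensional Lemma~\ref{lem:loca[a,b]} by restricting $h$ to line segments contained in $V$. I would fix two points $u,v\in V$ whose whole segment $[u,v]=\{u+t(v-u):0\le t\le 1\}$ lies in $V$, and set $g(t)=h(u+t(v-u))$ for $t\in[0,1]$. The goal is to show that $g$ is constant, so that $h(u)=h(v)$; once this equality is established for enough pairs $u,v$, constancy of $h$ on the connected set $V$ follows.

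The key step is to transfer the vanishing of the local (resp.\ pointwise) Lipschitz constant from $h$ to $g$. Since $\gamma(t)=u+t(v-u)$ is a $\|v-u\|$-dilation of $[0,1]$ into $V$, for $s\neq r$ we have
$$
\frac{|g(s)-g(r)|}{|s-r|}=\|v-u\|\cdot\frac{|h(\gamma(s))-h(\gamma(r))|}{\|\gamma(s)-\gamma(r)\|},
$$
and $\gamma(s),\gamma(r)\in B(\gamma(t),\epsilon\|v-u\|)$ whenever $s,r\in(t-\epsilon,t+\epsilon)$. Taking the supremum over such $s,r$ and letting $\epsilon\to0^+$ gives $\Lloc_t(g)\le \|v-u\|\,\Lloc_{\gamma(t)}(h)=0$; in the pointwise version, letting $s\to t$ gives $\Lpt_t(g)\le\|v-u\|\,\Lpt_{\gamma(t)}(h)=0$. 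In either case $g$ is locally (resp.\ pointwise) Lipschitz on $[0,1]$ with constant $0$ at every point, so the case $\alpha=0$ of Lemma~\ref{lem:loca[a,b]} forces $g$ to be constant, whence $h(u)=h(v)$.

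It then remains to connect arbitrary points of $V$ by such segments. If $V$ is a convex body, any two points are joined by a segment lying in $V$, so $h(u)=h(v)$ for all $u,v\in V$ and $h$ is constant. If $V$ is open and connected, I would invoke the standard fact that an open connected subset of a normed space is polygonally connected: any $u,v\in V$ are joined by a finite chain $u=p_0,p_1,\dots,p_k=v$ with each $[p_{i-1},p_i]\subseteq V$. Applying the segment argument to each link yields $h(p_{i-1})=h(p_i)$, hence $h(u)=h(v)$. The only points requiring care are this polygonal-connectivity input in the open connected case, and the observation that the balls $B(\gamma(t),\epsilon\|v-u\|)$ used above are taken inside $V$, so that the supremum defining $\Lloc_{\gamma(t)}(h)$ genuinely dominates the restricted supremum along the segment; the remaining estimates are routine.
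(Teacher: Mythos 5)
Your proof is correct and takes essentially the same route as the paper's: restrict $h$ to line segments in $V$, transfer the vanishing local/pointwise Lipschitz constants to the one-variable function, apply the $\alpha=0$ case of Lemma~\ref{lem:loca[a,b]} to get constancy along each segment, and finish with a connectedness (polygonal) argument. The only minor difference is the convex-body case, where the paper uses segments in the interior and then extends by continuity, whereas you use segments between arbitrary points of the convex set directly; both are valid.
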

\begin{proof}
Suppose $h\in \lip^\pt(V)$ with $\Lpt_q(h)=0$ for all $q\in V$.
  Let $(1-t)x + ty$ for  $t\in [0,1]$ be a path lying in the interior of  $V$.
  Define $f:[0,1]\to \mathbb{R}$ by $f(t) = h((1-t)x + ty)$.
  Then $f\in \lip^\pt([0,1])$ with $\Lpt(f)(t)=0$ for all $t\in [0,1]$.
  By Lemma \ref{lem:loca[a,b]}, $f$ is a constant function.  In other
  words, $h(x)=h(y)$.
   A connected argument shows that $h$ is constant on $V$ if $V$ is open and connected.
  If $V$ is a convex body, then we can see that $h$ is constant on  the interior of  $V$.  By continuity, $h$ is
  constant on $V$.
\end{proof}

We also have the following well known results.

\begin{prop}[Rademacher/Stepanov Theorem; see, e.g., \cite{Federer69, Maly99}]\label{prop:RademacherStepanov}\label{lem:loca}
 Let $V$ be an open subset of $\mathbb{R}^n$.  Every locally/pointwise Lipschitz function
 $\varphi(x)=(\varphi_1(x),\varphi_2(x),\cdots,\varphi_n(x))$
 from $V$ into $\mathbb{R}^n$
 is differentiable on $V$ almost everywhere.  In particular,
 for  every $v\neq 0$ in $\mathbb{R}$, the directional derivative
$D_v\varphi_{i}(x)=v\cdot\nabla\varphi_i(x)$ exists for almost every $x$  in $V$
for $i=1,2,\cdots,n$.
 \end{prop}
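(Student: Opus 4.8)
The statement is the classical Rademacher theorem together with its Stepanov refinement, so the plan is to recall the two standard arguments and indicate how they combine. Since differentiability is a coordinate-wise matter, it suffices to treat a single scalar component, which I denote $f$. For the locally Lipschitz case I would first fix a direction $v$ and observe that on each line $x_0+\mathbb{R}v$ the restriction of $f$ is locally Lipschitz in one real variable, hence (as in the proof of Lemma~\ref{lem:loca[a,b]}) of bounded variation, absolutely continuous, and therefore differentiable at almost every parameter value; Fubini's theorem then upgrades this to the existence of the directional derivative $D_vf(x)$ for almost every $x\in V$. Writing $\nabla f=(D_{e_1}f,\dots,D_{e_n}f)$, which exists a.e., the next step is to show that $v\mapsto D_vf(x)$ is linear a.e., that is, $D_vf(x)=\nabla f(x)\cdot v$ for a.e.\ $x$. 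This I would establish by testing against $\zeta\in C_c^\infty(V)$: the difference quotients are uniformly bounded by the local Lipschitz constant, so dominated convergence together with a change of variables gives $\int D_vf\,\zeta=-\int f\,(v\cdot\nabla\zeta)=\sum_i v_i\int D_{e_i}f\,\zeta$, whence $D_vf=\nabla f\cdot v$ a.e.\ for each fixed $v$.

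The delicate step, and the one I expect to be the main obstacle, is promoting ``linear directional derivatives in each fixed direction'' to genuine total differentiability a.e.; the difficulty is that the exceptional null set obtained above depends on $v$. Here I would fix a countable dense set $\{v_k\}$ of unit vectors, intersect the corresponding full-measure sets to obtain a set $A$ of full measure on which $D_{v_k}f(x)=\nabla f(x)\cdot v_k$ holds for every $k$, and then exploit the Lipschitz bound to gain uniformity. The remainder $Q(v,t)=t^{-1}(f(x+tv)-f(x))-\nabla f(x)\cdot v$ is Lipschitz in $v$ with constant at most $2L$, uniformly in small $t$, where $L$ is the Lipschitz constant near $x$. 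Density of $\{v_k\}$ combined with this equi-Lipschitz control then forces $Q(v,t)\to 0$ uniformly over the unit sphere as $t\to 0$, which is exactly differentiability of $f$ at each $x\in A$.

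For the pointwise Lipschitz case I would reduce to Rademacher. Setting $E=\{x\in V:\Lpt_x(f)<\infty\}$, which under our hypothesis is all of $V$, I would decompose $E=\bigcup_{m,j}E_{m,j}$, where $E_{m,j}$ collects the points at which $\Lpt_x(f)<m$ with the governing radius exceeding $1/j$; after a further partition into pieces of diameter less than $1/j$, the function $f$ is genuinely $m$-Lipschitz on each such piece $P$. Extending $f\mid_P$ to a Lipschitz function $g_P$ on $\mathbb{R}^n$ by the McShane--Whitney extension and applying the Rademacher part gives differentiability of $g_P$ almost everywhere. The final step, and the second genuine obstacle, is the transfer: at almost every $x\in P$ one has simultaneously that $x$ is a Lebesgue density point of $P$, that $g_P$ is differentiable at $x$, and that $\Lpt_x(f)<\infty$; a density estimate comparing an arbitrary nearby $y$ with a nearby point $z\in P$, where $f(z)=g_P(z)$, then yields differentiability of $f$ at $x$ with $Df(x)=Dg_P(x)$. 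Summing over the countable family and over the coordinates completes the argument, and the last assertion on directional derivatives is then immediate from $D_v\varphi_i(x)=v\cdot\nabla\varphi_i(x)$.
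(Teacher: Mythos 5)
Your outline is correct, but note that the paper does not prove this proposition at all: it is quoted as the classical Rademacher/Stepanov theorem and justified entirely by the citations to Federer and Mal\'y, so there is no internal argument to compare yours against. What you wrote is a faithful reconstruction of the standard proofs. The Rademacher half is the Evans--Gariepy route: directional derivatives a.e.\ via one-dimensional restrictions and Fubini, linearity $D_vf=\nabla f\cdot v$ a.e.\ by testing against $C_c^\infty$ functions, and a countable dense set of directions upgraded to genuine differentiability through the equi-Lipschitz bound on the remainder $Q(v,t)$; your constant $2L$ is legitimate because $|\nabla f(x)|\le L$ on the full-measure set, by density of the $v_k$ and the bound $|D_{v_k}f|\le L$. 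The Stepanov half is the Federer-style scheme: decomposition into the sets $E_{m,j}$, restriction to pieces $P$ of diameter below $1/j$ on which $f$ is honestly $m$-Lipschitz, McShane extension, and transfer of differentiability at Lebesgue density points, the key inequality $|f(y)-f(z)|\le m|y-z|$ being available precisely because the comparison point $z$ lies in $E_{m,j}$ and $|y-z|<1/j$. By contrast, the ``simple proof'' of Stepanov that the paper actually cites (Mal\'y) is a genuinely different and shorter argument: for a countable family of rational balls $B_i$ on which $f$ is bounded, one forms the $i$-Lipschitz upper and lower envelopes $u_i\le f\le v_i$, observes that $u_i(x)=f(x)=v_i(x)$ at every pointwise Lipschitz point $x$ for a suitable $i$, and then squeezes: at a.e.\ such $x$ both envelopes are differentiable (by Rademacher) with equal derivatives, forcing differentiability of $f$ at $x$. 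Mal\'y's trick bypasses both the McShane extension and the density-point transfer, which are the two most technical steps in your outline; your route has the virtue of being the self-contained textbook argument. The only details you pass over silently---measurability of the sets $E_{m,j}$ and of the sets where directional derivatives exist, needed for the density-point and Fubini arguments respectively---are standard, the former following from continuity of a pointwise Lipschitz function.
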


\begin{thm}\label{thm:locaU}\label{thm:loca[0,1]n}
Let $U$ and $V$ be two open connected sets or two convex bodies in $\mathbb{R}^n$.
Let $T:\lip^{\loc}(U)\to \lip^{\loc}(V)$
(resp.\ $T:\lip^\pt(U)\to \lip^\pt(V)$) be a bijective weighted composition operator defined by $Tf= h\cdot f\circ \varphi$
 preserving local (resp.\ pointwise) Lipschitz constants.
Then $h$ assumes constant  value $\pm \alpha^{-1}$ on $V$ for some $\alpha>0$, and
$\varphi$ assumes the form
$\varphi(v)=\alpha Av +b$ for an $n\times n$ orthogonal matrix $A$ and a point $b\in \mathbb{R}^n$.
\end{thm}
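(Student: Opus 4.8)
The plan is to determine $h$ first and then the composition map $\varphi\colon V\to U$. In the convex-body case we may pass to the interiors of $U,V$ (which are open and convex, hence open connected) and extend the resulting formula to $U,V$ by continuity, so assume $U,V$ are open. Testing $T$ on the constant function $1$ gives $T1=h$, and preservation forces $\Lloc_q(h)=\Lloc_{\varphi(q)}(1)=0$ (resp.\ $\Lpt_q(h)=0$) for every $q\in V$; by Lemma~\ref{lem:wf-const}, $h\equiv c$ is a nonzero constant, and I write $c=\pm\alpha^{-1}$ with $\alpha=1/|c|>0$. Since $h$ is constant, the preservation identity reads $\Lloc_q(f\circ\varphi)=\alpha\,\Lloc_{\varphi(q)}(f)$ for all $f$ and $q$ (resp.\ with $\Lpt$). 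Surjectivity of $T$ makes $\varphi$ injective (if $\varphi(v_1)=\varphi(v_2)$ then every $Tf$ agrees at $v_1,v_2$, contradicting surjectivity), so by invariance of domain $\varphi$ is a homeomorphism of $V$ onto an open set $\Omega=\varphi(V)\subseteq\mathbb{R}^n$; write $\psi=\varphi^{-1}\colon\Omega\to V$. On the open set $\Omega$ one has $T^{-1}g=c^{-1}g\circ\psi$, and the same computation yields $\Lloc_u(g\circ\psi)=\alpha^{-1}\Lloc_{\psi(u)}(g)$ for $u\in\Omega$ (resp.\ $\Lpt$). Feeding in the linear test functions $f=\langle e,\cdot\rangle$ for unit vectors $e$ (whose local, resp.\ pointwise, Lipschitz constant equals $\|e\|=1$ at every point), I record the scalar facts that $\ell_e:=\langle e,\varphi(\cdot)\rangle$ has $\Lloc_q(\ell_e)=\alpha$ for every $q\in V$, while $\langle e,\psi(\cdot)\rangle$ has local (resp.\ pointwise) constant $\alpha^{-1}$ at every $u\in\Omega$.

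Next I convert these infinitesimal data into one-sided Lipschitz bounds on balls, using the covering/telescoping argument from the proof of Lemma~\ref{lem:loca[a,b]}. Restricting $\ell_e$ to a segment inside a ball $B\subseteq V$ gives a one-variable function whose (local, resp.\ pointwise) Lipschitz constant is $\le\alpha$, so the Lebesgue-number partition estimate yields $|\ell_e(u)-\ell_e(v)|\le\alpha\|u-v\|$ for $u,v\in B$. Taking the supremum over unit $e$ and using $\|\varphi(u)-\varphi(v)\|=\sup_{\|e\|=1}\langle e,\varphi(u)-\varphi(v)\rangle$, I conclude that $\varphi$ is $\alpha$-Lipschitz on every ball in $V$; the identical argument shows $\psi$ is $\alpha^{-1}$-Lipschitz on every ball in $\Omega$. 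Now fix $q\in V$, choose a ball $B(q,\rho)\subseteq V$, and, by continuity of $\varphi$, a smaller neighbourhood $N\ni q$ so small that $\varphi(N)$ lies in a ball $B(\varphi(q),\sigma)\subseteq\Omega$. For $u,v\in N$ the segment $[u,v]$ lies in $B(q,\rho)$, giving $\|\varphi(u)-\varphi(v)\|\le\alpha\|u-v\|$, while $[\varphi(u),\varphi(v)]$ lies in $B(\varphi(q),\sigma)$, giving $\|u-v\|=\|\psi(\varphi(u))-\psi(\varphi(v))\|\le\alpha^{-1}\|\varphi(u)-\varphi(v)\|$. The two bounds force the exact equality $\|\varphi(u)-\varphi(v)\|=\alpha\|u-v\|$ on $N$; that is, $\varphi$ is a \emph{local} $\alpha$-dilation.

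Finally I globalize. On each such $N$ the map $\alpha^{-1}\varphi$ is an isometry of the open set $N$ onto the open set $\alpha^{-1}\varphi(N)$, so by Mankiewicz's theorem (Proposition~\ref{Man1972}(b)) it extends to a surjective isometry of $\mathbb{R}^n$, which by the Mazur--Ulam theorem (Proposition~\ref{Man1972}(a)) is affine; hence $\varphi(x)=\alpha A_q x+b_q$ on $N$ for an orthogonal $A_q$ and a vector $b_q$. Because any two of these local affine representations agree on the open overlap of their domains, the locally constant data $(A_q,b_q)$ are constant on the connected set $V$, so $\varphi(x)=\alpha Ax+b$ on all of $V$ with $A$ orthogonal; together with $h\equiv\pm\alpha^{-1}$ this is the assertion, and the pointwise statement follows verbatim with $\Lpt$ in place of $\Lloc$ (the one-variable input being the pointwise clause of Lemma~\ref{lem:loca[a,b]}). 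The crux, and the step I expect to be most delicate, is upgrading the merely one-sided local-Lipschitz information to the exact \emph{local dilation} equality: this is where injectivity, the open image supplied by invariance of domain, and the matching $\alpha^{-1}$-bound for the inverse $\psi$ on small balls are all indispensable, and it is precisely this local rigidity that lets me invoke only Mankiewicz's extension result rather than any Liouville-type conformal rigidity theorem.
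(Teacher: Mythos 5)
Your proof is correct, and it reaches the conclusion by a genuinely different route from the paper's. Both arguments open the same way ($h=T\mathbf{1}_U$ is constant by Lemma~\ref{lem:wf-const}) and close the same way (invariance of domain, Mankiewicz's theorem via Proposition~\ref{Man1972}, and a patching/connectedness argument), but the core step --- showing $\varphi$ is a local $\alpha$-dilation --- is done differently. The paper first gets local Lipschitzness of $\varphi$ from Proposition~\ref{prop:lip-loc-pt-comp}, then invokes Rademacher/Stepanov (Proposition~\ref{prop:RademacherStepanov}) to differentiate $\varphi$ almost everywhere, and uses two kinds of test functions: the coordinate functions, forcing each row of $D[\varphi]$ to have norm $\alpha$ a.e., and the quadratic function $f(x,y)=xy$, forcing the rows to be orthogonal; only with $\alpha^{-1}D[\varphi]$ orthogonal a.e.\ does integration along curves give $\|\varphi(u)-\varphi(v)\|\le\alpha\|u-v\|$ on balls, and the reverse bound comes from repeating the a.e.-differentiability argument for $\varphi^{-1}$. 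You bypass all measure-theoretic differentiability: testing against $\langle e,\cdot\rangle$ for \emph{every} unit vector $e$, restricting to segments, applying the telescoping/Lebesgue-number estimate from the proof of Lemma~\ref{lem:loca[a,b]}, and taking the supremum over $e$ by Euclidean duality gives the one-sided bound directly; the matching bound for $\psi=\varphi^{-1}$ on the open image then yields the exact dilation identity, and orthogonality of $A$ comes for free from Mazur--Ulam/Mankiewicz rather than from a Jacobian computation. This buys you a proof with no Rademacher/Stepanov, no second-order test function, and a treatment uniform in the dimension $n$ (the paper writes out $n=2$ and says the rest is similar). Two small points of care: invariance of domain needs continuity of $\varphi$, not just injectivity, so you should first record that each component $\langle e,\varphi(\cdot)\rangle=\pm\alpha\, T(\langle e,\cdot\rangle)$ is locally (resp.\ pointwise) Lipschitz, hence $\varphi$ is continuous, before citing it; and the convex-body case cannot literally be reduced to a bijective operator between $\lip^\loc(U^\circ)$ and $\lip^\loc(V^\circ)$ (restriction is not surjective) --- rather, one runs your argument at interior points, noting $\varphi(V^\circ)\subseteq U^\circ$ by invariance of domain and that Lemma~\ref{lem:wf-const} already applies to convex bodies, and then extends the affine formula from $V^\circ$ to $V$ by continuity, as you indicate. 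Neither point affects the substance.
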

\begin{proof}
  We present the proof for the case when $U,V$ are open connected sets in $\mathbb{R}^2$,
   and $T:\lip^\loc(U)\to \lip^\loc(V)$ preserves local
  Lipschitz constants. The other cases follow similarly.

It is clear that $\varphi$ is injective and $\varphi(V)$ is a dense subset of $U$.
Since $h=T\mathbf{1}_U$ is locally Lipschitz with $L^\loc_{\varphi(p)}(h) = L^\loc_p(\mathbf{1}_U)=0$ for all $p\in V$,
we see that $h=\pm\alpha^{-1}$ is a constant function for some $\alpha>0$ by Lemma \ref{lem:wf-const}.
It  then follows from Proposition \ref{prop:lip-loc-pt-comp}  that both $\varphi$ and $\varphi^{-1}$ are locally Lipschitz.

We write $\varphi(x,y)=(\varphi_1(x,y),\varphi_2(x,y))$.
It follows from Proposition \ref{prop:RademacherStepanov} that
all partial derivatives $\dfrac{\partial\varphi_1}{\partial x}$, $\dfrac{\partial\varphi_1}{\partial y}$,
$\dfrac{\partial\varphi_2}{\partial x}$ and $\dfrac{\partial\varphi_2}{\partial y}$ exist,
and $\Lloc_{(x,y)}(\varphi_i)=\|\nabla \varphi_i (x,y)\|$ for $i=1,2$, for almost every point $(x,y)$ in $V$.
Let $I_1(x,y)=x$ and $I_2(x,y)=y$.
Then $I_1$ and $I_2$ belong to $\lip^{\loc}(V)$ with
$\Lloc_{(x,y)}(I_1)=\Lloc_{(x,y)}(I_2)=1$.
It follows  that
 $$
 \alpha^{-1}\Lloc_{\varphi(x,y)}(\varphi_1) =\Lloc_{\varphi(x,y)}(h\cdot I_1\circ \varphi) =\Lloc_{(x,y)}(I_1)=1
 $$
 for   almost every point $(x,y)$ in $V$.  Thus
$$
(\frac{\partial\varphi_1}{\partial x}(x,y))^2+(\frac{\partial\varphi_1}{\partial y}(x,y))^2=\alpha^2.
$$
Dealing with $I_2$ instead, we also have
$$
(\frac{\partial\varphi_2}{\partial x}(x,y))^2+(\frac{\partial\varphi_2}{\partial y}(x,y))^2=\alpha^2
$$
for almost all $(x,y)$ in $V$.

In general, for any $f$ in $\lip^{\loc}(U)$, by Proposition \ref{prop:RademacherStepanov} we have
$$
\Lloc_{(u,v)}(f) = \sqrt{\left(\frac{\partial f}{\partial x}\right)^2 + \left(\frac{\partial f}{\partial y}\right)^2}_{\mid(u,v)}
$$
for almost every $(u,v)$ in $U$.
Now let $f(x,y)=xy$  in $\lip^{\loc}(U)$.   We have
$\Lloc_{(u,v)}(f) = \sqrt{v^2 + u^2}$ for all $(u,v)$ in $U$.  In particular,
$$
\Lloc_{\varphi(x,y)}(f)=\sqrt{\varphi_1(x,y)^2+\varphi_2(x,y)^2} \quad\text{for  all $(x,y)$ in $U$.}
$$
Hence
\begin{align*}
\alpha \sqrt{\varphi_1^2+\varphi_2^2}\mid_{(x,y)}
&= \alpha \Lloc_{\varphi(x,y)}(f)\\
&= \alpha \Lloc_{(x,y)}(Tf) = \Lloc_{(x,y)}(\varphi_1(x,y)\varphi_2(x,y))\\
&= \sqrt{(\frac{\partial\varphi_1}{\partial x}\varphi_2+\frac{\partial\varphi_2}{\partial x}\varphi_1)^2
+(\frac{\partial\varphi_1}{\partial y}\varphi_2+\frac{\partial\varphi_2}{\partial y}\varphi_1)^2}\mid_{(x,y)}\\
&=\sqrt{\alpha^2(\varphi_1^2+\varphi_2^2)
+2(\frac{\partial\varphi_1}{\partial x}\frac{\partial\varphi_2}{\partial x}
+\frac{\partial\varphi_1}{\partial y}\frac{\partial\varphi_2}{\partial y})\varphi_1\varphi_2}\mid_{(x,y)}\\
\end{align*}
for almost all $(x,y)$ in $V$.
This implies
$$\frac{\partial\varphi_1}{\partial x}(x,y)\frac{\partial\varphi_2}{\partial x}(x,y)
+\frac{\partial\varphi_1}{\partial y}(x,y)\frac{\partial\varphi_2}{\partial y}(x,y)=0$$
for almost all $(x,y)$ in $V$.
Therefore,
$$
D[\varphi]_{\mid(x,y)} = \left(
                           \begin{array}{cc}
                             \dfrac{\partial\varphi_1}{\partial x} & \dfrac{\partial\varphi_1}{\partial y} \\
                             \dfrac{\partial\varphi_2}{\partial x} & \dfrac{\partial\varphi_2}{\partial y} \\
                           \end{array}
                         \right)_{\mid(x,y)}
$$
exists, and $\alpha^{-1} D[\varphi]_{\mid(x,y)}$ is an orthogonal matrix  for almost all $(x,y)$ in $V$.

For any two points $(x_1,y_1)$ and $(x_2,y_2)$ in an open ball $B$ contained in $V$, let
$C: [0,1]\to V$ be any smooth curve in $B$ joining $(x_1,y_1)$ to $(x_2,y_2)$.
Using Lebesgue integration, we have
\begin{align*}
\|\varphi(x_1,y_1)-\varphi(x_2,y_2)\| &\leq \int_0^1 \|\frac{d}{dt}(\varphi(C(t)))\|\,dt\\
&=\int_0^1 \|D[\varphi(C(t))]\frac{d}{dt}C(t)\|\,dt = \alpha \int_0^1 \|\frac{d}{dt}C(t)\|\,dt.
\end{align*}
Hence, taking infimum over all such smooth curves $C$, we have
$$
\|\varphi(x_1,y_1)-\varphi(x_2,y_2)\| \leq \alpha \|(x_1,y_1)-(x_2,y_2)\|.
$$
In particular, the injective map $\varphi$ is continuous on the open set $V\subset \mathbb{R}^n$.
By the invariance of domain,  $U_0=\varphi(V)$  is an open dense subset of the open set $U\subset\mathbb{R}^n$.
It is clear that $T^{-1}g\mid_{U_0}= \alpha g\circ\varphi^{-1}$ preserves local Lipschitz constants.
By a similar argument, we have $\alpha D[\varphi^{-1}]_{\mid q}$ exists as an orthogonal matrix for almost every
point $q\in U_0$.
Hence, the reverse inequality
$$
\|\varphi(x_1,y_1)-\varphi(x_2,y_2)\| \geq \alpha \|(x_1,y_1)-(x_2,y_2)\|
$$
also holds.
In other words,  $\varphi$ is an $\alpha$-dilation  from $B$ onto
$\varphi(B)$ for any open ball $B$ contained in $V$.
It   follows from Proposition \ref{Man1972} that $\varphi\!\mid_B$ can be uniquely extended to an $\alpha$-dilation
from $\mathbb{R}^n$ onto itself.  By a connectedness argument, we see that $\varphi$ can be extended uniquely to the same $\alpha$-dilation
of $\mathbb{R}^n$ onto itself.
In other words, $\varphi(v)=\alpha Av + b$ for an $n\times n$ orthogonal matrix $A$ and a vector $b\in \mathbb{R}^n$, as asserted.
\end{proof}

\begin{exam}\label{lem:loca[0,1]2}\label{thm:wloca[0,1]n}
 Let $T:\lip^{\loc}([0,1]^n)\to \lip^{\loc}([0,1]^n)$ be a bijective
 weighted composition operator 
preserving local Lipschitz constants.  Then
$$
Tf = \pm f(Px+b),
$$
 where $P$ is an   $n\times n$ signed permutation matrix and $b\in \mathbb{R}^n$ has entries either $0$ or $1$.
\end{exam}

\begin{exam}
Let $T:\lip^{\loc}([a,b])\to \lip^{\loc}([c,d])$ defined by $Tf=h\cdot f\circ \varphi$
be a bijection and preserve local Lipschitz constants. Then
$h=\frac{d-c}{b-a}$ or $h=-\frac{d-c}{b-a}$
and $\varphi(x)=\frac{b-a}{d-c}(x-c)+a$ or $\varphi(x)=\frac{b-a}{d-c}(d-x)+a$.
\end{exam}

\section{Local/pointwise Lipschitz constant preservers of flat manifolds}

In this section we consider locally (resp.\ pointwise) Lipschitz functions defined on flat manifolds.
A \emph{flat manifold} of dimension $n$ is a set $M$
with a family of injective mappings, called \emph{charts},
 $\phi_{\alpha}:U_{\alpha}\subseteq \mathbb{R}^n\to \phi_{\alpha}(U_{\alpha})\subseteq M$
of open connected sets $U_{\alpha}$ containing $0$ such that:
    \begin{enumerate}[(1)]

    \item
    $M\subseteq\cup_{\alpha}\phi_{\alpha}(U_{\alpha})$;

    \item
    For any pair $\alpha$, $\beta$ with $W=\phi_{\alpha}(U_{\alpha})\cap \phi_{\beta}(U_{\beta})\neq\emptyset$, the transition map
    $\phi_{\alpha}^{-1}\circ \phi_{\beta}|_{\phi_{\beta}^{-1}(W)}$ is a diffeomorphism from $\phi_\beta^{-1}(W)$
    onto $\phi_\alpha^{-1}(W)$, and
    $D[\phi_{\alpha}^{-1}\circ \phi_{\beta}]$ is  orthogonal matrix-valued everywhere;

    \item
    The family $\{(U_{\alpha},\phi_{\alpha})\}_\alpha$ is maximal with respect to the conditions (1) and (2).

    \end{enumerate}
For example,  lines,  circles,   planes,   spheres and the M\"{o}bius strip are all flat manifolds.
Note that a flat manifold becomes a metric space when it is equipped with the geodesic distance between points.

\begin{defn}
Let $M$ be a flat manifold of dimension $n$.
A function $f:M\to \mathbb{R}$ is \emph{locally} (resp.\ \emph{pointwise}) \emph{Lipschitz} if for all $p$ in $M$,
there is a chart $\phi_{p}:U\subseteq \mathbb{R}^n\to \phi_{p}(U)\subseteq M$ with $\phi_{p}(0)=p$
such that $f\circ \phi_{p}:U\subseteq \mathbb{R}^n\to \mathbb{R}$ is locally (resp.\ pointwise) Lipschitz at $0$.
Moreover, we define the \emph{local} (resp.\ \emph{pointwise}) \emph{Lipschitz constant} of $f$ at $p$ by
$\Lloc_p(f) = \Lloc_0(f\circ\phi_p)$ (resp.\ $\Lpt_p(f)=\Lpt_{0}(f\circ \phi_{p})$).
\end{defn}

\begin{lem}
Let $M$ be a flat manifold.
A function $f:M\to \mathbb{R}$ is locally \textup{(}resp.\ pointwise\textup{)} Lipschitz at $p$ with respect to a
chart $(\phi_{p}, U)$ is equivalent to the same property with respect to another chart $(\psi_{p},V)$ at $p$.  Indeed,
we have $\Lloc_0(f\circ\phi_p)=\Lloc_0(f\circ\psi_p)$ \textup{(}resp.\ $\Lpt_0(f\circ \phi_{p})=\Lpt_0(f\circ \psi_{p})$\textup{)}.
\end{lem}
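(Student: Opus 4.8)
The plan is to reduce a change of chart at $p$ to composition with a fixed orthogonal linear map, and then to observe that such a composition preserves both notions of infinitesimal Lipschitz behaviour at the base point exactly. First I would set up the transition map. Since both $(\phi_p,U)$ and $(\psi_p,V)$ are charts at $p$, we have $\phi_p(0)=\psi_p(0)=p$, so $p$ lies in the overlap $W=\phi_p(U)\cap\psi_p(V)$ and $0$ belongs to the open set $\phi_p^{-1}(W)$. Hence the transition map $\tau:=\psi_p^{-1}\circ\phi_p$ is a diffeomorphism from a neighbourhood of $0$ onto a neighbourhood of $0$ with $\tau(0)=0$. Writing $g=f\circ\psi_p$, the identity
\[
f\circ\phi_p=(f\circ\psi_p)\circ(\psi_p^{-1}\circ\phi_p)=g\circ\tau
\]
holds on a neighbourhood of $0$.

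Next I would upgrade the infinitesimal orthogonality supplied by axiom~(2) into a genuine rigid motion. By that axiom $D[\tau](z)$ is orthogonal for every $z$ in the domain. Choosing $r>0$ with the convex ball $B(0,r)\subseteq\phi_p^{-1}(W)$, the fundamental theorem of calculus gives, for $x,y\in B(0,r)$,
\[
\tau(x)-\tau(y)=\int_0^1 D[\tau]\bigl(y+t(x-y)\bigr)(x-y)\,dt,
\]
and since each $D[\tau]$ is orthogonal the integrand has norm $\|x-y\|$, whence $\|\tau(x)-\tau(y)\|\le\|x-y\|$; applying the same to $\tau^{-1}$ yields the reverse inequality, so $\tau$ is an isometry on $B(0,r)$. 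As $\tau$ is a diffeomorphism it carries $B(0,r)$ onto an open set, so by Proposition~\ref{Man1972} it extends to a surjective isometry of $\mathbb{R}^n$; being affine and fixing $0$, this extension is an orthogonal linear map $O$, and therefore $\tau(x)=Ox$ on $B(0,r)$.

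Finally I would verify that composition with $O$ leaves the constants unchanged. Because $\|Ox-Oy\|=\|x-y\|$ and $O$ maps each $B(0,\epsilon)$ onto itself, for $\epsilon<r$ we have
\[
\sup_{x\neq y\in B(0,\epsilon)}\frac{|g(Ox)-g(Oy)|}{\|x-y\|}
=\sup_{u\neq v\in B(0,\epsilon)}\frac{|g(u)-g(v)|}{\|u-v\|},
\]
and letting $\epsilon\to0^+$ gives $\Lloc_0(f\circ\phi_p)=\Lloc_0(g\circ\tau)=\Lloc_0(g)=\Lloc_0(f\circ\psi_p)$. For the pointwise constant, using $O0=0$ and the substitution $u=Ox$,
\[
\frac{|g(Ox)-g(0)|}{\|x\|}=\frac{|g(u)-g(0)|}{\|u\|},
\]
so taking $\limsup$ as $x\to0$ yields $\Lpt_0(f\circ\phi_p)=\Lpt_0(f\circ\psi_p)$; in particular $f\circ\phi_p$ is locally (resp.\ pointwise) Lipschitz at $0$ exactly when $f\circ\psi_p$ is. The only substantive step is the middle one: converting the everywhere-orthogonal-derivative hypothesis into the statement that $\tau$ is the restriction of a linear isometry fixing the origin. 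Once that rigidity is in hand, equality of the constants is immediate, since an exact isometry fixing the base point requires no limiting estimate at all.
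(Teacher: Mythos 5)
Your argument is correct in substance and reaches the stated conclusion, but it takes a genuinely different---and heavier---route than the paper. The paper never establishes any rigidity of the transition map: it restricts to the overlap $W=\phi_p(U)\cap\psi_p(V)$, writes $f\circ\phi_p=(f\circ\psi_p)\circ(\psi_p^{-1}\circ\phi_p)$ on $\phi_p^{-1}(W)$, and applies the chain-rule submultiplicativity of pointwise (resp.\ local) Lipschitz constants at the base point, $\Lpt_0(g\circ\tau)\le \Lpt_{\tau(0)}(g)\cdot\Lpt_0(\tau)$, together with the fact that the transition map $\tau=\psi_p^{-1}\circ\phi_p$ and its inverse $\phi_p^{-1}\circ\psi_p$ (also a transition map) each have constant $1$ at $0$ because their derivatives are orthogonal; the two symmetric inequalities give equality. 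You instead upgrade the orthogonal-derivative hypothesis to the exact statement that $\tau$ coincides with an orthogonal linear map near $0$, via the fundamental theorem of calculus plus Proposition~\ref{Man1972}, and then conclude by substitution. Your route buys a stronger geometric fact (transition maps of flat manifolds are locally rigid motions), and it makes the equality of constants exact rather than a consequence of two opposite inequalities; the paper's route is shorter, avoids the extension theorem entirely, and only ever needs the one-sided bound $\Lpt_0(\tau)\le 1$, $\Lloc_0(\tau)\le 1$ (for the local case this bound is itself proved by exactly your convex-ball FTC estimate, so that computation is common to both proofs).

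One step of yours needs repair. The FTC correctly gives $\|\tau(x)-\tau(y)\|\le\|x-y\|$ for $x,y\in B(0,r)$, because the segment $[y,x]$ lies in the convex set $B(0,r)$. But ``applying the same to $\tau^{-1}$'' does not immediately yield the reverse inequality on $B(0,r)$: the FTC argument for $\tau^{-1}$ requires the segment $[\tau(y),\tau(x)]$ to lie in the domain of $\tau^{-1}$, and $\tau(B(0,r))$ need not be convex. The fix is routine: pick a convex ball $B(0,r')\subseteq\psi_p^{-1}(W)$, then by continuity of $\tau$ at $0$ a smaller ball $B(0,\rho)\subseteq B(0,r)$ with $\tau(B(0,\rho))\subseteq B(0,r')$; the FTC bound for $\tau^{-1}$ on $B(0,r')$ then gives $\|x-y\|\le\|\tau(x)-\tau(y)\|$ for $x,y\in B(0,\rho)$, so $\tau$ is an isometry on $B(0,\rho)$, and Proposition~\ref{Man1972} applies there. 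Since $\Lloc_0$ and $\Lpt_0$ depend only on arbitrarily small neighborhoods of $0$, running the remainder of your argument on $B(0,\rho)$ instead of $B(0,r)$ loses nothing.
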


\begin{proof}
Let $W=\phi_p(U)\cap \psi_p(V)$.
Observe that
$$
\Lpt_0(f\circ \phi_{p})=
\Lpt_0(f\circ\phi_p\mid_{\phi_p^{-1}(W)}) \quad\text{and}\quad \Lpt_0(f\circ\psi_p)=\Lpt_0(f\circ\psi_p\mid_{\psi_p^{-1}(W)}).
$$
It follows
\begin{align*}
\Lpt_0(f\circ \phi_{p})&=\Lpt_0(f\circ \psi_{p}\circ \psi_{p}^{-1} \circ \phi_{p}\mid_{\phi_p^{-1}(W)})\\
&\leq \Lpt_0(f\circ \psi_{p})\cdot \Lpt_0(\psi_{p}^{-1} \circ \phi_{p}\mid_{\phi_p^{-1}(W)})=\Lpt_0(f\circ \psi_{p})
\end{align*}
and
\begin{align*}
\Lpt_0(f\circ \psi_{p})&=\Lpt_0(f\circ \phi_{p}\circ \phi_{p}^{-1} \circ \psi_{p}\mid_{\psi_p^{-1}(W)})\\
&\leq \Lpt_0(f\circ \phi_{p})\cdot \Lpt_0(\phi_{p}^{-1} \circ \psi_{p}\mid_{\psi_p^{-1}(W)})=\Lpt_0(f\circ \phi_{p}).
\end{align*}
Hence $\Lpt_0(f\circ \phi_{p})=\Lpt_0(f\circ \psi_{p})$.

The case for the local Lipschitz constants is similar.
\end{proof}

\begin{thm}\label{thm:Flat}
Let $M$, $N$ be two $n$-dimensional flat manifolds. Let $\sigma :N\to M$
 such that
the  composition operator $Tf=f\circ \sigma$ defines a bijective linear map
 $T:\lip^{\pt}(M)\rightarrow\lip^{\pt}(N)$  satisfying that $\Lpt_x(Tf)=\Lpt_{\sigma(x)}(f)$
\textup{(}resp.\ $T:\lip^\loc(M)\rightarrow\lip^\loc(N)$  satisfying that $\Lloc_x(Tf)=\Lloc_{\sigma(x)}(f)$\textup{)} for all
$x$ in $N$.  Then $\sigma$ is
 a local isometry in the sense that for any point $p\in N$, and any
  chart $\phi: U\to M$ of $\sigma(p)$ and $\psi: V\to N$ of $p$
such that $\sigma(\psi(V))\subseteq \phi(U)$,
the induced map $\phi^{-1}\circ\sigma\circ \psi : V\to U$ is an isometry.
\end{thm}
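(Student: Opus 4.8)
The plan is to reduce everything to the Euclidean situation already settled in Theorem~\ref{thm:locaU}. Fix a point $p\in N$ and charts $\phi:U\to M$ of $\sigma(p)$ (so $\phi(0)=\sigma(p)$) and $\psi:V\to N$ of $p$ (so $\psi(0)=p$), shrinking $V$ if necessary so that $\sigma(\psi(V))\subseteq\phi(U)$. Put $\tilde\sigma=\phi^{-1}\circ\sigma\circ\psi:V\to U$, a map between open connected subsets of $\mathbb{R}^n$ with $\tilde\sigma(0)=0$; the whole point is to show that $\tilde\sigma$ is an isometry for the Euclidean metrics, which is exactly the assertion that $\sigma$ is a local isometry.

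First I would transport the preservation identity to the chart level. For $x\in V$ write $q=\psi(x)$ and $u=\tilde\sigma(x)$, so that $\sigma(q)=\phi(u)$. Composing $\psi$ and $\phi$ with the translations $w\mapsto x+w$ and $w\mapsto u+w$ produces charts centred at $q$ and at $\sigma(q)$; these are admissible members of the maximal atlas because translations have orthogonal (indeed identity) derivative, and the chart-independence lemma established just above shows that the pointwise Lipschitz constants are computed the same way through them. Hence, for every $f\in\lip^\pt(M)$, one gets $\Lpt_q(Tf)=\Lpt_x(f\circ\sigma\circ\psi)$ and $\Lpt_{\sigma(q)}(f)=\Lpt_{u}(f\circ\phi)$, where the right-hand sides are now ordinary Euclidean pointwise Lipschitz constants of functions on $V$ and $U$. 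Writing $G=f\circ\phi$ and noting that $f\circ\sigma\circ\psi=G\circ\tilde\sigma$, the hypothesis $\Lpt_q(Tf)=\Lpt_{\sigma(q)}(f)$ turns into
\[
\Lpt_x(G\circ\tilde\sigma)=\Lpt_{\tilde\sigma(x)}(G).
\]

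The crux, and the step I expect to be the main obstacle, is that this identity is a priori available only for test functions $G$ of the special form $f\circ\phi$ with $f\in\lip^\pt(M)$, whereas running the Euclidean argument requires it for enough functions on $U$, in particular for the coordinate functions $I_j$, the products $I_iI_j$, and the truncated distances $\min\{\|\cdot-u\|,1\}$. I would resolve this using the flat structure: since every transition map has orthogonal derivative, each chart $\phi$ is a local isometry, so transporting a function through $\phi$ preserves pointwise Lipschitz constants. Given a prescribed $G$ on a small ball $B\subset U$, multiply it by a smooth cutoff supported in a compact subset of $U$ and identically $1$ on $B$; carrying this product through $\phi$ and extending by $0$ outside $\phi(\operatorname{supp})$ yields a genuine $f\in\lip^\pt(M)$ with $f\circ\phi=G$ on $B$. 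Because all the constants used are purely local, this manufactures every test function needed on small balls.

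With the displayed identity now valid for all $G\in\lip^\pt(B)$, I would finish as in the proof of Theorem~\ref{thm:locaU}, except that here the weight is $h\equiv1$, forcing $\alpha=1$. Proposition~\ref{prop:lip-loc-pt-comp} gives $\Lpt_x(\tilde\sigma)=1$ at every $x$; Proposition~\ref{prop:RademacherStepanov} then makes $\tilde\sigma$ differentiable almost everywhere, testing against the $I_j$ forces each $\|\nabla\tilde\sigma_i\|=1$, and testing against the products $I_iI_j$ forces the gradients $\nabla\tilde\sigma_i$ to be mutually orthogonal, so that $D[\tilde\sigma]$ is an orthogonal matrix almost everywhere. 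Integrating along paths yields $\|\tilde\sigma(x_1)-\tilde\sigma(x_2)\|\le\|x_1-x_2\|$ on balls; applying the same reasoning to $\tilde\sigma^{-1}$, which is legitimate since $\sigma$ is induced by a bijective composition operator and is therefore a bijection, supplies the reverse inequality. Thus $\tilde\sigma$ is an isometry on balls, and Proposition~\ref{Man1972} upgrades this to an isometry of $V$ onto its image. As $p$ and the compatible charts $\phi,\psi$ were arbitrary, $\sigma$ is a local isometry. The local Lipschitz case is identical, reading $\Lloc$ for $\Lpt$ throughout and invoking the local Lipschitz halves of Propositions~\ref{prop:lip-loc-pt-comp} and~\ref{prop:RademacherStepanov}.
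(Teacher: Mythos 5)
Your overall route is the same as the paper's: localize through the given charts and reduce to the Euclidean case. The paper's own proof is essentially two sentences: it asserts that $g\mapsto T(g\circ\phi^{-1})\circ\psi = g\circ(\phi^{-1}\circ\sigma\circ\psi)$ defines a bijection from $\lip^\pt(U)$ onto $\lip^\pt(V)$ preserving pointwise Lipschitz constants, and then cites Theorem~\ref{thm:locaU}. What you add is a genuine improvement in rigor: you correctly identify that $g\circ\phi^{-1}$ is defined only on $\phi(U)$, not on all of $M$, so $T$ cannot literally be applied to it; your cutoff-and-extend-by-zero construction, together with re-running the proof of Theorem~\ref{thm:locaU} on small balls with locally manufactured test functions, closes exactly the gap the paper glosses over, and it also spares you from verifying that the induced chart-level operator is a bijection of $\lip^\pt(U)$ onto $\lip^\pt(V)$, which citing Theorem~\ref{thm:locaU} as a black box would require.

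Two points in your write-up need repair, both fixable with tools you already use. First, bijectivity of $T$ does not make $\sigma$ a bijection: surjectivity of $T$ forces $\sigma$ to be injective (otherwise no function $f\circ\sigma$ could separate two points with the same image), and injectivity of $T$ forces $\sigma(N)$ to be dense in $M$ (otherwise a bump supported off $\overline{\sigma(N)}$ would lie in the kernel), but surjectivity of $\sigma$ does not follow. This does not sink the reverse inequality: as in the paper's proof of Theorem~\ref{thm:locaU}, once $\tilde\sigma$ is known to be continuous and injective, invariance of domain makes $\tilde\sigma(V)$ open, and applying the hypothesis to $f=T^{-1}g$ gives $\Lpt_{\sigma(x)}(T^{-1}g)=\Lpt_x(g)$, which localizes to the open set $\tilde\sigma(V)$; so run the inverse argument there rather than on all of $U$. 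Second, there is an ordering circularity in your plan: localizing an arbitrary $G$ via a cutoff (so that $G\circ\tilde\sigma$ and $(G\chi)\circ\tilde\sigma$ agree near $x$) presupposes that $\tilde\sigma$ maps a neighborhood of $x$ into the ball $B$, i.e.\ continuity of $\tilde\sigma$ at $x$ --- which is precisely what you want Proposition~\ref{prop:lip-loc-pt-comp} to deliver. Moreover the cutoff spoils the truncated distance as a first test function: $(f\chi)(\tilde\sigma(y))\to 0$ no longer forces $\tilde\sigma(y)\to\tilde\sigma(x)$, because $\chi$ may vanish at distant points. Start instead with the compactly supported tent functions $u\mapsto\max\{r-\|u-\tilde\sigma(x)\|,0\}$, which transport to $M$ with no cutoff at all; continuity of the composite at $x$ (a consequence of its pointwise Lipschitzness there, exactly the trick used in the proof of Proposition~\ref{prop:lip-loc-pt-comp}) then forces $\tilde\sigma(y)\to\tilde\sigma(x)$ and yields $\Lpt_x(\tilde\sigma)=1$, after which your cutoff localization of general test functions becomes legitimate. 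With these two adjustments your argument is sound.
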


\begin{proof}
Let $p$ be in $N$, equipped with charts $\phi:U\to M$ and $\psi:V\to N$ such that
$\psi(0)=p$, $\phi(0)=\sigma(p)$ and $\sigma(\psi(V))\subseteq \phi(U)$.
Note that both $U,V$ are open and connected in $\mathbb{R}^n$.
The composition map $T(g\circ\phi^{-1})\circ\psi =
g\circ(\phi^{-1}\circ\sigma\circ\psi)$
defines a bijection from $\lip^\pt(U)$ onto $\lip^\pt(V)$ preserving the pointwise Lipschitz constants.
It follows from Theorem \ref{thm:loca[0,1]n} that   $\phi^{-1}\circ\sigma\circ\psi$ extends to an isometry
from $\mathbb{R}^n$ onto $\mathbb{R}^n$.

The case for local Lipschitz functions is similar.
\end{proof}

\begin{exam}
Let $S^2$ be the unit sphere in $\mathbb{R}^3$.
Let $T:\lip^{\pt}(S^2)\rightarrow\lip^{\pt}(S^2)$ be a bijection such that $Tf=f\circ \sigma$,
and $\Lpt_p(Tf)=\Lpt_{\sigma(p)}(f)$ for all $p\in S^2$.
By Theorem \ref{thm:Flat},  $\sigma$ is a local isometry,
and thus a surjective isometry with respect to the geodesic metric on $S^2$.
\end{exam}

\begin{exam}
Let $0< r< R$ and
\begin{center}
$S^1\times S^1$ = $\{((R+r\cos \theta)\cos \phi,(R+r\cos \theta)\sin \phi,r\sin \theta)\in\mathbb{R}^3$ :
$0\leq \theta \leq2\pi$, $0\leq \phi \leq2\pi$ $\}$
\end{center}
be a $2$-dimensional torus.
Let $T:\lip^{\pt}(S^1\times S^1)\rightarrow\lip^{\pt}(S^1\times S^1)$ be a bijection such that $Tf=f\circ \sigma$ and $\Lpt_p(Tf)=\Lpt_{\sigma(p)}(f)$ for all $p\in S^1\times S^1$.
It follows from Theorem \ref{thm:Flat} that
 $\sigma$  is a local isometry, and thus it is
   a surjective isometry of $S^1\times S^1$ in the geodesic metric.
\end{exam}


\end{document}